\def\vbar{\mathchoice{\vrule height6.3ptdepth-.5ptwidth.8pt\kern- .8pt}
{\vrule height6.3ptdepth-.5ptwidth.8pt\kern-.8pt} {\vrule
height4.1ptdepth-.35ptwidth.6pt\kern-.6pt} {\vrule
height3.1ptdepth-.25ptwidth.5pt\kern-.5pt}}
\newtheorem*{theoremA}{Main Theorem}
\newtheorem*{remarkA}{Remark}
\newtheorem{theorem}{Theorem}[section]
\newtheorem{definition}[theorem]{Definition}
\newtheorem{prop}[theorem]{Proposition}
\numberwithin{equation}{section}
\begin{document}

\title{Classification of five-dimensional symmetric Leibniz algebras}
\author{
Iroda Choriyeva$^{1, 2}$, \ Abror Khudoyberdiyev$^{1, 2}$
\footnote{Corresponding author, E-mail: khabror@mail.ru}
\\
\\ {\small $^1$Institute of Mathematics Uzbekistan Academy of Sciences.}
\\ {\small $^2$National University of Uzbekistan.}
}
\date{}

\maketitle

\begin{abstract}
In this paper, we give the complete classification of $5$-dimensional complex solvable symmetric Leibniz algebras.
\end{abstract}

\textbf{Key words}: symmetric Leibniz algebras,
solvable algebras, automorphism.

\textbf{Mathematics Subject Classification}: 17A30, 17A32

\section*{Introduction}

A left (resp. right)
Leibniz algebra is a nonassociative algebra where the left (resp. right) multiplications are
derivations. Note that left (right) Leibniz algebras were introduced by Bloh in 1965
under the name of D-algebras \cite{Blokh}. In 1993, the Leibniz algebras were
 rediscovered by Jean-Louis Loday \cite{Loday} as a generalization of Lie algebras with no symmetry requirements. If a nonassociative algebra is both a left and right Leibniz algebra, then it is called a symmetric Leibniz algebra \cite{Mason}. These algebras had been considered in \cite{Benayadi3}, appearing in the study of some bi-invariant connections on Lie groups. In recent years, the theory of Leibniz algebras has been intensively studied.
During the last 30 years, the theory of Leibniz
algebras has been actively examined, and many results on Lie
algebras have been extended to Leibniz algebras (see, for example,
\cite{AyupovBook, Fialowski, Barnes, Gomez-Vidal, Towers, Stitzinger, LiMo}).

Recently, S. Benayadi and S. Hidri in \cite{Benayadi3} investigated the structure of left (resp. right) Leibniz algebras endowed with invariant, non-degenerate and symmetric bilinear forms,
which are called quadratic left (resp. right) Leibniz algebras. In particular, they proved that a
quadratic left (or right) Leibniz algebra is a symmetric Leibniz algebra.
At the same time, the variety of symmetric Leibniz algebras plays an important role in one-sided Leibniz algebras (more, about symmetric Leibniz algebras, see, \cite{Barreiro, Benayadi1, Benayadi3, Jibladze1, Jibladze2, Remm}).
Symmetric Leibniz algebras are related to Lie racks \cite{HamFat}, and every symmetric Leibniz algebra is flexible, power-associative and a nilalgebra with nilindex $3$ \cite{Feldvoss}.
 A symmetric Leibniz algebra under commutator and anticommutator multiplications gives a Poisson algebra \cite{Albuquerque}.

The classification, up to isomorphism, of any class of algebras is
a fundamental and very difficult problem. It is one of the first
problems that one encounters when trying to understand the
structure of a member of this class of algebras. There are many results related to the algebraic classification of small-dimensional algebras in the varieties of Lie, Leibniz, Jordan, Zinbiel and many other algebras. In particular, $5$-dimensional nilpotent, restricted Lie algebras \cite{Darijani}, $6$-dimensional nilpotent Lie algebras \cite{CicaloGraaf,Graaf},
$6$-dimensional solvable Lie algebras \cite{Turkowski},
$4$-dimensional solvable Leibniz algebras \cite{CanKhud},
$5$-dimensional solvable Leibniz algebras with three-dimensional nilradicals \cite{Khudoy-5dim},
 and some others have been described. The list of all real and complex Lie algebras up to dimension six can be found in L. Snobl and P. Winternitz's monograph \cite{Snobl}.

The purpose of the present work is to continue the study of symmetric Leibniz algebras. Since the algebraic classification of all $5$-dimensional nilpotent symmetric Leibniz algebras is given in \cite{Alvarez},  we reduce our attention to the classification of $5$-dimensional solvable symmetric Leibniz algebras. In \cite{Barreiro}, a useful characterization of
symmetric Leibniz algebras is given.
Using this characterisation, a natural method for
the classification of symmetric Leibniz algebras was given in \cite{HamFat}. It should
be noted that the center of the underlying Lie algebra plays an important
role in this method.

Using this method, we give the description of five-dimensional solvable symmetric Leibniz
algebras. For this purpose, we consider all five-dimensional solvable Lie algebras with non-zero center (even split algebras) and obtain the complete list of five-dimensional solvable symmetric Leibniz algebras.
Our main result related to the algebraic classification of the variety of solvable symmetric Leibniz
algebras is summarized below.

\begin{theoremA}
Up to isomorphism, there are infinitely many isomorphism classes of
complex $5$-dimensional solvable (non-split, non-nilpotent, non-Lie) symmetric Leibniz algebras,
described explicitly in Section 2 (see Theorems \ref{thm2.1}, \ref{thm2.2}, \ref{thm2.3}, \ref{thm2.4}, \ref{thm2.5} and \ref{thm2.6}) in terms of
$30$ one-parameter families, $8$ two-parameter families, $1$ three-parameter family and
$38$ additional isomorphism classes.
\end{theoremA}

\begin{remarkA} Since there are no  non-solvable, non-split, non-Lie $5$-dimensional symmetric Leibniz algebras, previous Main Theorem gives us the complete classification of $5$-dimensional complex symmetric Leibniz algebras.
\end{remarkA}

Throughout the paper, all the algebras (vector spaces) considered are finite-dimensional and over the field of complex numbers. Also, in tables of multiplications of algebras, we give nontrivial products only.

\section{Preliminaries}
In this section, we give the necessary definitions and preliminary results.

\begin{definition}  An algebra $(\mathcal{L},[-,-])$ over a field $F$ is
called a Lie algebra if for any $x, y, z \in \mathcal{L},$ the following
identities:
$$[x,y] = - [y,x], \quad [x,[y,z]]+[y,[z,x]]+[z,[x,y]]=0$$
hold.
\end{definition}

\begin{definition} An algebra $(\mathcal{L}, \cdot)$ is said to be a symmetric Leibniz algebra, if for any $x, y, z\in \mathcal{L}$ it satisfies the following identities:
 $$x \cdot (y\cdot z)=(x \cdot y) \cdot z+y \cdot (x \cdot z), \quad (x \cdot y) \cdot z=x\cdot (y \cdot z) + (x \cdot z) \cdot y.$$
\end{definition}

Any Lie algebra is a symmetric Leibniz algebra. However, the class of symmetric Leibniz algebras is far bigger than the class of Lie algebras.

Let $\mathcal{L}$ be a vector space equipped with a bilinear map $\cdot : \mathcal{L}\times \mathcal{L} \rightarrow \mathcal{L}$. For all $x, y \in \mathcal{L}$, we define [-,-] and $\circ$ as follows:
$$[x,y]=\frac{1} 2(x \cdot y-y\cdot x), \qquad x \circ y=\frac{1} 2 (x\cdot y+y\cdot x).$$


\begin{prop}\cite{Barreiro} Let $(\mathcal{L}, \cdot)$ be an algebra. The following assertions are equivalent:
\begin{itemize}
\item[1.] $(\mathcal{L}, \cdot)$ is a symmetric Leibniz algebra.

\item[2.] The following conditions hold:
\begin{itemize}
  \item[(a)] $(\mathcal{L}, [-,-])$ is a Lie algebra.
  \item[(b)] For any $u,v\in \mathcal{L},$  $u \circ v$ belongs to the center of $(\mathcal{L}, [-,-]).$
  \item[(c)] For any $u,v\in \mathcal{L},$ $([u,v]) \circ w=0$ and $(u\circ  v) \circ w=0.$
\end{itemize}
\end{itemize}
\end{prop}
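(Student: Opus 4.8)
The plan is to work with the two defining identities of a symmetric Leibniz algebra — the left identity $(L)$, namely $x\cdot(y\cdot z)=(x\cdot y)\cdot z+y\cdot(x\cdot z)$, and the right identity $(R)$, namely $(x\cdot y)\cdot z=x\cdot(y\cdot z)+(x\cdot z)\cdot y$ — and to exploit the decomposition $x\cdot y=[x,y]+x\circ y$ of the product into its skew and symmetric parts (recall that $[-,-]$ is skew and $\circ$ is symmetric by construction, and that the ground field has characteristic $0$). Since asserting $(L)$ and $(R)$ is the same as asserting $(L)$ and $(L)+(R)$, it is enough to analyse the equation $(L)+(R)$ first and then $(L)$ under the information gained. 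Throughout write $J(x,y,z):=[x,[y,z]]+[y,[z,x]]+[z,[x,y]]$ for the Jacobiator of $[-,-]$, so that condition $(a)$ is exactly $J\equiv 0$, and recall that $J$ is alternating in its arguments.

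Adding $(L)$ and $(R)$ and cancelling the common terms $x\cdot(y\cdot z)$ and $(x\cdot y)\cdot z$ gives $y\cdot(x\cdot z)+(x\cdot z)\cdot y=0$, that is, $u\circ(v\cdot w)=0$ for all $u,v,w\in\mathcal{L}$. Writing $v\cdot w=[v,w]+v\circ w$ and combining this relation for the pairs $(v,w)$ and $(w,v)$, the sum yields $u\circ(v\circ w)=0$ and the difference yields $u\circ[v,w]=0$; by the symmetry of $\circ$ these are precisely the two halves of $(c)$. Conversely $(c)$ gives back $u\circ(v\cdot w)=u\circ[v,w]+u\circ(v\circ w)=0$. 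Hence the equation $(L)+(R)$ is equivalent to $(c)$.

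Now assume $(c)$ and expand $(L)$ using $x\cdot y=[x,y]+x\circ y$. Every term produced in which $\circ$ is applied to a bracket or to a $\circ$-product vanishes by $(c)$, and what remains is the single identity
\[
(\star)\qquad J(x,y,z)=[x\circ y,z]+[y,x\circ z]-[x,y\circ z].
\]
(Carrying out the same reduction on $(R)$ yields the same identity $(\star)$, as it must since $(R)=(L)+(R)-(L)$.) Interchanging $y$ and $z$ in $(\star)$, using $J(x,z,y)=-J(x,y,z)$ together with the symmetry of $\circ$, and adding the result to $(\star)$, all Jacobiator terms cancel and one is left with $[x,y\circ z]=0$ for all $x,y,z$, which is $(b)$. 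Substituting $(b)$ back into $(\star)$ kills the entire right-hand side, so $J\equiv 0$, which is $(a)$. Thus, granted $(c)$, identity $(L)$ forces $(a)$ and $(b)$.

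For the converse, assume $(a)$, $(b)$, $(c)$. Then $(c)$ already gives the equation $(L)+(R)$ by the first step; and $(a)$ together with $(b)$ makes both sides of $(\star)$ vanish, so $(\star)$ holds, which — in the presence of $(c)$ — is equivalent to $(L)$. Hence $(L)$ holds and so does $(R)=(L)+(R)-(L)$, i.e.\ $(\mathcal{L},\cdot)$ is a symmetric Leibniz algebra. The one step that is not mere bookkeeping is the extraction of $(b)$ from $(\star)$ via the skew-symmetry of the Jacobiator; the tedious-but-routine part is the termwise expansion of $(L)$ and checking that exactly the asserted terms survive once $(c)$ is imposed.
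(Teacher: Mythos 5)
Your argument is correct, and I checked the computations: adding $(L)$ and $(R)$ does collapse to $y\cdot(x\cdot z)+(x\cdot z)\cdot y=0$, i.e.\ $u\circ(v\cdot w)=0$, which by (anti)symmetrizing in $(v,w)$ is exactly condition $(c)$; expanding $(L)$ modulo $(c)$ leaves precisely $J(x,y,z)=[x\circ y,z]+[y,x\circ z]-[x,y\circ z]$ (the three $\circ$-terms $x\circ(y\cdot z)$, $(x\cdot y)\circ z$, $y\circ(x\cdot z)$ all die by $(c)$, and the pure bracket terms assemble into the Jacobiator); antisymmetrizing in $y,z$ uses only the skew-symmetry of $[-,-]$ (no Jacobi identity needed), so the deduction of $(b)$ and then $(a)$ is legitimate, and every reduction is reversible, giving the converse. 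Note that the paper does not prove this proposition at all: it is quoted from Barreiro--Benayadi \cite{Barreiro}, so there is no in-paper argument to compare against. Your proof is the natural direct one -- split $x\cdot y=[x,y]+x\circ y$ into skew and symmetric parts and polarize the two Leibniz identities -- and it serves as a complete, self-contained justification of the cited statement (over any field of characteristic different from $2$, which covers the complex case used throughout the paper).
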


According to this proposition, any symmetric Leibniz algebra is given by a Lie algebra $(\mathcal{L}, [-,-])$ and a bilinear symmetric form $\omega: \mathcal{L}\times \mathcal{L}\rightarrow Z(\mathcal{L})$
where $Z(\mathcal{L})$ is the center of the Lie algebra, such that for any
$x, y, z\in \mathcal{L}$
\begin{equation}\label{eq1.1} \omega([x, y], z)=\omega(\omega(x, y), z)=0.\end{equation}

Then the product of the symmetric Leibniz algebra is given by
$$u\cdot v=[u, v] + u \circ v.$$

\begin{prop}\cite{HamFat} \label{prop2} Let $(\mathcal{G} , [-,-])$ be a Lie algebra and $\omega$ and $\mu$
two solutions of \eqref{eq1.1}.
Then $(\mathcal{G}, \cdot_{\omega})$ is isomorphic to  $(\mathcal{G},\cdot_{\mu})$ if and only if there exists an automorphism $A$ of $(\mathcal{G}, [-,-])$ such that
$$\mu(u,v)=A^{(-1)}\omega(Au, Av).$$
\end{prop}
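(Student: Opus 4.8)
The statement to prove is Proposition \ref{prop2}: given a Lie algebra $(\mathcal{G},[-,-])$ and two solutions $\omega,\mu$ of \eqref{eq1.1}, the symmetric Leibniz algebras $(\mathcal{G},\cdot_\omega)$ and $(\mathcal{G},\cdot_\mu)$ are isomorphic iff there is an automorphism $A$ of $(\mathcal{G},[-,-])$ with $\mu(u,v)=A^{-1}\omega(Au,Av)$.

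The plan: First, unwind the definitions. The product is $u\cdot_\omega v = [u,v]+\omega(u,v)$ where $[u,v]$ is antisymmetric and $\omega(u,v)$ is symmetric — so these are exactly the antisymmetric and symmetric parts of the product. A linear bijection $A:\mathcal{G}\to\mathcal{G}$ is an isomorphism from $(\mathcal{G},\cdot_\mu)$ to $(\mathcal{G},\cdot_\omega)$ iff $A(u\cdot_\mu v) = Au\cdot_\omega Av$ for all $u,v$, i.e. $A[u,v]+A\mu(u,v) = [Au,Av]+\omega(Au,Av)$.

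The key step: split this equation into its symmetric and antisymmetric parts in $(u,v)$. Since $A$ is linear, $A[u,v]$ is antisymmetric in $(u,v)$ and $A\mu(u,v)$ is symmetric; likewise $[Au,Av]$ is antisymmetric and $\omega(Au,Av)$ symmetric. Hence the single equation is equivalent to the pair: $A[u,v]=[Au,Av]$ for all $u,v$ (i.e. $A$ is a Lie algebra automorphism) together with $A\mu(u,v)=\omega(Au,Av)$, which rearranges to $\mu(u,v)=A^{-1}\omega(Au,Av)$. This gives both directions at once. For the forward direction one reads off that $A$ is a Lie automorphism and the formula holds; for the converse, given such an $A$, one checks it is an algebra isomorphism by running the computation backwards, and one must also verify $A^{-1}\omega(Au,Av)$ indeed lands in $Z(\mathcal{G})$ and is a genuine solution of \eqref{eq1.1} — but the latter is automatic since an automorphism preserves the center and the bracket, so $A^{-1}\omega(A\cdot,A\cdot)$ satisfies \eqref{eq1.1} whenever $\omega$ does.

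The only subtlety worth flagging: one should note that a linear isomorphism of the Leibniz algebras $(\mathcal{G},\cdot_\mu)\to(\mathcal{G},\cdot_\omega)$ does not a priori have to respect the $[-,-]$/$\circ$ decomposition, so the argument must invoke uniqueness of the decomposition of a bilinear map into symmetric and antisymmetric parts — valid in characteristic $\neq 2$, which holds over $\mathbb{C}$. There is no real obstacle here; the whole proof is a short formal manipulation, and I would present it as: write the isomorphism condition, separate symmetric/antisymmetric parts, and conclude.

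\begin{proof}
Recall that for a solution $\omega$ of \eqref{eq1.1} the product is $u\cdot_\omega v=[u,v]+\omega(u,v)$, where $[u,v]$ is antisymmetric in $(u,v)$ and $\omega(u,v)$ is symmetric; similarly for $\mu$. Let $A$ be a linear automorphism of the vector space $\mathcal{G}$. Then $A$ is an isomorphism from $(\mathcal{G},\cdot_\mu)$ onto $(\mathcal{G},\cdot_\omega)$ if and only if
$$A[u,v]+A\mu(u,v)=[Au,Av]+\omega(Au,Av)\qquad\text{for all }u,v\in\mathcal{G}.$$
Since $A$ is linear, $A[u,v]$ and $[Au,Av]$ are antisymmetric in $(u,v)$, while $A\mu(u,v)$ and $\omega(Au,Av)$ are symmetric in $(u,v)$. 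As the decomposition of a bilinear map into its symmetric and antisymmetric parts is unique over $\mathbb{C}$, the displayed identity is equivalent to the conjunction of
$$A[u,v]=[Au,Av]\qquad\text{and}\qquad A\mu(u,v)=\omega(Au,Av)\qquad\text{for all }u,v\in\mathcal{G}.$$
The first identity says precisely that $A$ is an automorphism of the Lie algebra $(\mathcal{G},[-,-])$, and the second is equivalent to $\mu(u,v)=A^{-1}\omega(Au,Av)$.

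If $(\mathcal{G},\cdot_\mu)$ and $(\mathcal{G},\cdot_\omega)$ are isomorphic, an isomorphism $A$ between them satisfies the displayed equation, hence $A$ is a Lie automorphism of $(\mathcal{G},[-,-])$ and $\mu(u,v)=A^{-1}\omega(Au,Av)$. Conversely, suppose $A$ is an automorphism of $(\mathcal{G},[-,-])$ with $\mu(u,v)=A^{-1}\omega(Au,Av)$. Then both identities above hold, so adding them yields $A(u\cdot_\mu v)=Au\cdot_\omega Av$ for all $u,v$, i.e. $A$ is an isomorphism of algebras. (Note that $\mu$ is automatically a solution of \eqref{eq1.1}: since $A$ is a Lie automorphism it preserves $Z(\mathcal{G})$, and for all $x,y,z$ one has $\mu([x,y],z)=A^{-1}\omega(A[x,y],Az)=A^{-1}\omega([Ax,Ay],Az)=0$ and likewise $\mu(\mu(x,y),z)=A^{-1}\omega(\omega(Ax,Ay),Az)=0$.) This completes the proof.
\end{proof}
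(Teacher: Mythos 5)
Your proof is correct: the equivalence $A(u\cdot_\mu v)=Au\cdot_\omega Av \iff \bigl(A[u,v]=[Au,Av]\ \text{and}\ A\mu(u,v)=\omega(Au,Av)\bigr)$, obtained by the unique decomposition of a bilinear map into symmetric and antisymmetric parts (valid over $\mathbb{C}$), immediately gives both directions, and your remark that an automorphism preserves the center so that $A^{-1}\omega(A\cdot,A\cdot)$ is again a solution of \eqref{eq1.1} covers the only point one might worry about. The paper itself states this proposition as a citation from \cite{HamFat} and gives no proof, so there is nothing to compare against; your argument is the standard one and is complete as written.
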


For an arbitrary symmetric Leibniz algebra $(L, \cdot)$ we define the \emph{derived} and \emph{central series} as follows:
$$L^{[1]}=L, \ L^{[s+1]}=L^{[s]} \cdot L^{[s]}, \quad s\geq 1,$$
$$L^{1}=L, \ L^{k+1}=L^{k} \cdot L, \quad k\geq 1.$$

\begin{definition} An $n$-dimensional symmetric Leibniz algebra $L$ is
called \emph{solvable (nilpotent)} if there exists $s\in {\mathbb{N}}$ ($k\in {\mathbb{N}}$) such that $L^{[s]}=0$ ($L^{k}=0$).
Such minimal number is called the \emph{index of solvability} (\emph{nilpotency}).
\end{definition}

\subsection{Classification of symmetric Leibniz algebras up to dimension four}

The classification of Leibniz algebras was obtained up to dimension four in papers \cite{4-dimnilpotent, CanKhud, Casas3-dim, Ismailov}. From the list of this classification, we can obtain the list of symmetric Leibniz algebras in low dimensions.

First, we give the list of two and three-dimensional non-Lie non-split symmetric Leibniz algebras.

\begin{longtable}{ll llllll}
\hline
{$\lambda_2$} &$:$ &  $e_1 \cdot e_1 = e_2.$ \\

\hline
${\mathcal N}_1$ &$:$ &  $e_1 \cdot e_2=  e_3,$ & $e_2\cdot e_1=e_3.$ &&& \\
${\mathcal N}_2^{\alpha}$ &$:$ & $e_1 \cdot e_1= \alpha e_3,$ & $e_2 \cdot e_1=  e_3,$ & $e_2\cdot e_2=e_3.$ &&& \\
{${\mathcal R}_1$} &$:$ &  $e_1 \cdot e_2=  e_1,$ & $e_2\cdot e_1= - e_1,$ & $e_2 \cdot e_2=  e_3.$\\
\hline
\end{longtable}

Note that, the algebras $\lambda_2,$ ${\mathcal N}_1$ and ${\mathcal N}_2^{\alpha}$ are nilpotent and ${\mathcal R}_1$ is a non-nilpotent solvable algebra.

In the following table,  we give the list of $4$-dimensional nilpotent (non 2-step nilpotent) non-Lie symmetric Leibniz algebras.
\begin{longtable}{ll llllll}
\hline
{$S_1$}&$:$ &  $e_1 \cdot e_1 = e_4,$ & $e_1\cdot e_2=e_3,$ & $e_2\cdot e_1=-e_3,$ & $e_2\cdot e_2=e_4,$ \\ & & $e_2\cdot e_3=e_4,$ & $e_3\cdot e_2=-e_4.$ \\

{$S_2$}&$:$ &  $e_1 \cdot e_1 = e_4,$ & $e_1\cdot e_2=e_3,$ & $e_2\cdot e_1=-e_3$ &  $e_2\cdot e_3=e_4,$ & $e_3\cdot e_2=-e_4.$ \\

{$S_3$}&$:$ &  $e_1 \cdot e_2 = e_3+e_4,$ & $e_2\cdot e_1=-e_3,$ &  $e_2\cdot e_3=e_4,$ & $e_3\cdot e_2=-e_4.$ \\

{$S_4$}&$:$ &  $e_1 \cdot e_2 = e_3,$ & $e_2\cdot e_1=-e_3,$ & $e_2\cdot e_2=e_4,$ &  $e_2\cdot e_3=e_4,$ & $e_3\cdot e_2=-e_4.$ \\
\hline
\end{longtable}

Here we give the list of $4$-dimensional non-Lie non-split solvable (non-nilpotent) symmetric Leibniz algebras.

\begin{longtable}{ll llllll}
\hline
{$L_1$}&$:$ &  $e_1 \cdot e_1 = e_4,$ & $e_1\cdot e_2=-e_2,$ & $e_2\cdot e_1=e_2,$ & \\
& & $e_1\cdot e_3=e_3,$ & $e_3\cdot e_1=-e_3,$ & $e_2\cdot e_3=e_4,$ & $e_3\cdot e_2=-e_4.$\\
{$L_2^{\alpha}$}&$:$ &  $e_1 \cdot e_1 = \alpha e_4,$ & $e_1\cdot e_2=e_4,$ & $e_2\cdot e_2=e_4,$ &  $e_1\cdot e_3=-e_3,$ & $e_3\cdot e_1=e_3.$ \\
{$L_3$}&$:$ &  $e_1 \cdot e_1 = 2 e_4,$ & $e_2\cdot e_2=e_4,$ &  $e_1\cdot e_3=-e_3,$ & $e_3\cdot e_1=e_3.$ \\
{$L_4^{\alpha}$}&$:$ &  $e_1 \cdot e_1 = e_4,$ & $e_1\cdot e_2=-e_2,$ & $e_2\cdot e_1=e_2,$ &  $e_1\cdot e_3=-\alpha e_3,$ & $e_3\cdot e_1=\alpha e_3.$ \\
$L_5^{\alpha}$ &$:$ &  $e_1\cdot e_2=-e_2,$ & $e_2\cdot e_1=e_2,$ &  $e_1\cdot e_3= (\alpha-1) e_4,$ &  $e_3\cdot e_1= (\alpha+1) e_4.$\\
{$L_6$}&$:$ & $e_1 \cdot e_1 = e_4,$ & $e_2\cdot e_1=e_2+e_3,$ & $e_1\cdot e_2=-e_2-e_3,$ &   $e_1\cdot e_3=-e_3,$ & $e_3\cdot e_1=e_3.$ \\
$L_7$ &$:$ & $e_1 \cdot e_1 = e_4,$ &  $e_1\cdot e_2=-e_2,$ & $e_2\cdot e_1=e_2,$ &  $e_1\cdot e_3= - e_4,$ &  $e_3\cdot e_1= e_4.$\\
\hline
\end{longtable}

It should be noted that in \cite{Alvarez} (Theorem B) the list of $4$-dimensional non-nilpotent, non-Lie symmetric Leibniz algebras is given. The algebras $\mathcal{L}_{35}^{\alpha\neq-1}$ and $\mathcal{L}_{27}$ in \cite{Alvarez} are written as $L_5^{\alpha}$ in our list and since the algebra $\mathcal{L}_{14}$ is split we omit it here.

\section{Classification of five-dimensional solvable symmetric Leibniz algebras}

Now we give the classification of five-dimensional solvable symmetric Leibniz algebras.
Since the list of $5$-dimensional nilpotent symmetric Leibniz algebras is given in \cite{Alvarez},  we reduce our attention to the classification of $5$-dimensional non-nilpotent solvable symmetric Leibniz algebras.
For this purpose we use the fact that any symmetric Leibniz algebra can be constructed by Lie algebra with non-zero center.

In this section, we use the term just solvable symmetric Leibniz algebra, instead of non-nilpotent, non-split, non-Lie solvable symmetric Leibniz algebra.


\subsection{Five-dimensional solvable symmetric Leibniz algebras, whose underlying Lie algebra is
non-split}

First, we give the description of all five-dimensional solvable symmetric Leibniz algebras, whose underlying Lie algebra is non-split. Below, we give the list of non-split $5$-dimensional complex solvable Lie algebras with non zero center \cite{Snobl}:

\begin{longtable}{ll llllll}
$\mathcal{S}_{5.1}$: & $[e_2,e_5]= e_1,$ & $[e_3,e_5]= e_2,$ & $[e_4,e_5]=e_4.$\\
  $\mathcal{S}_{5.2}$: & $[e_2,e_5]= e_1,$ & $[e_3,e_5]= e_3,$ & $[e_4,e_5]=e_3+e_4.$\\
  $\mathcal{S}_{5.3}$: & $[e_2,e_5]= e_1,$ & $[e_3,e_5]= e_3,$ & $[e_4,e_5]=\lambda e_4.$\\
  $\mathcal{S}_{5.14}$: & $[e_3,e_2]= e_1,$ & $[e_3,e_5]= e_2,$ & $[e_4,e_5]=e_4.$\\
 $\mathcal{S}_{5.15}$: & $[e_3,e_2]= e_1,$ & $[e_2,e_5]= e_2,$ & $[e_3,e_5]= -e_3,$ & $[e_4,e_5]=e_1.$\\
 $\mathcal{S}_{5.17}$:&  $[e_3,e_2]= e_1,$&  $[e_2,e_5]= e_2,$& $[e_3,e_5]= -e_3,$& $[e_4,e_5]=\lambda e_4.$\\
 $\mathcal{S}_{5.18}$: & $[e_3,e_2]= e_1,$ & $[e_2,e_5]= -e_2,$ & $[e_3,e_5]= e_3+e_4,$ & $[e_4,e_5]=e_4.$\\
 $\mathcal{S}_{5.20}$: & $[e_3,e_2]= e_1,$ & $[e_1,e_5]= e_1,$ & $[e_2,e_5]= e_2,$ & $[e_3,e_5]=e_4.$\\
 $\mathcal{S}_{5.33}$: & $[e_4,e_2]= e_1,$& $[e_4,e_3]= e_2,$ & $[e_2,e_5]=-e_2,$ & $[e_3,e_5]=-2e_3,$ & $[e_4,e_5]=e_4.$\\
 $\mathcal{S}_{5.39}$: & $[e_2,e_4]= e_2,$ & $[e_3,e_5]= e_3,$ & $[e_4,e_5]=e_1.$
\end{longtable}

\begin{theorem} \label{thm2.1} Let $\mathcal{L}$ be a five-dimensional complex solvable symmetric Leibniz algebra, whose underlying Lie algebra is non-split, then it is isomorphic to one of the following pairwise non-isomorphic algebras:
\end{theorem}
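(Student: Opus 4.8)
The strategy is the one indicated in the introduction: by Proposition~\ref{prop2}, a symmetric Leibniz structure on a fixed Lie algebra $\mathcal{G}$ is the same datum as a symmetric bilinear form $\omega:\mathcal{G}\times\mathcal{G}\to Z(\mathcal{G})$ satisfying \eqref{eq1.1}, taken up to the action of $\mathrm{Aut}(\mathcal{G},[-,-])$. So I would run through the ten non-split solvable Lie algebras $\mathcal{S}_{5.1},\dots,\mathcal{S}_{5.39}$ listed above one at a time, and for each do three things: (i) compute the center $Z(\mathcal{G})$; (ii) determine the general symmetric bilinear form $\omega$ valued in $Z(\mathcal{G})$ and impose the constraints $\omega([x,y],z)=0$ and $\omega(\omega(x,y),z)=0$ for all basis triples, which cuts $\omega$ down to a finite-parameter family; (iii) compute $\mathrm{Aut}(\mathcal{G})$ and use the transformation rule $\mu(u,v)=A^{-1}\omega(Au,Av)$ from Proposition~\ref{prop2} to normalize the remaining parameters, thereby producing the list of pairwise non-isomorphic algebras. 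Finally the product of the Leibniz algebra is reconstructed as $u\cdot v=[u,v]+\omega(u,v)$, and I would also discard any algebra that turns out to be nilpotent, split, or Lie (i.e.\ $\omega\equiv0$), since those are excluded by the statement and covered elsewhere.

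Some simplifications will recur and are worth isolating first. Since $\omega$ is valued in the center and must kill the derived algebra in its first slot (hence, by symmetry, in either slot), $\omega$ factors through $(\mathcal{G}/[\mathcal{G},\mathcal{G}])\times(\mathcal{G}/[\mathcal{G},\mathcal{G}])\to Z(\mathcal{G})$; and the further condition $\omega(\omega(x,y),z)=0$ says the image of $\omega$ must lie in the subspace of $Z(\mathcal{G})$ annihilated by $\omega(\cdot,-)$. For most of these algebras $[\mathcal{G},\mathcal{G}]$ is large and $\mathcal{G}/[\mathcal{G},\mathcal{G}]$ is spanned by the images of one or two generators (typically $e_5$, sometimes $e_4,e_5$), so the space of admissible $\omega$'s is small — often at most a line or a plane — which keeps the casework finite. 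I expect several of the $\mathcal{S}_{5.i}$ to admit only $\omega\equiv 0$ (no genuine non-Lie deformation), and these get eliminated immediately; the remaining ones contribute the parametrized families and sporadic algebras appearing in the theorem's list.

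The main obstacle is step (iii): the automorphism groups of these solvable Lie algebras are not small — they are typically positive-dimensional matrix groups mixing the nilradical with the torus direction — and one must carefully use this freedom to bring $\omega$ to a canonical form and, crucially, to decide exactly when two normalized forms are equivalent. This is where parameters get absorbed or pinned down, where continuous families genuinely survive (when a parameter is an invariant of the $\mathrm{Aut}$-action, e.g.\ a cross-ratio-type or eigenvalue-ratio-type quantity), and where the most bookkeeping is required; getting the non-isomorphism assertions right (no duplicates across different $\mathcal{S}_{5.i}$, and the stated parameter ranges being sharp) is the delicate part. A secondary but routine point is verifying at the end that each algebra produced is indeed non-nilpotent, non-split and non-Lie, so that it legitimately belongs in this theorem rather than in the nilpotent classification of \cite{Alvarez} or among the split algebras treated in the next subsection.
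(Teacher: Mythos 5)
Your plan is exactly the method the paper follows: for each of the ten non-split solvable Lie algebras with non-zero center it computes the admissible forms $\omega$ satisfying \eqref{eq1.1}, then uses the automorphism group and Proposition \ref{prop2} to normalize the parameters and extract the pairwise non-isomorphic algebras. So the approach matches the paper's proof; what remains is only the explicit case-by-case computation that the paper carries out.
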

\begin{longtable}{ll lllllllllllllll}
\hline
$\mathcal{L}_{1}^{\alpha}$ & : & $e_2\cdot e_5=e_1,$ &  $e_3\cdot e_5=e_2,$ & $e_4 \cdot e_5=e_4,$ & $e_3 \cdot e_3= e_1,$ & \\ & & $e_5\cdot e_2=-e_1,$ & $e_5\cdot e_3= - e_2,$  &
  $e_5\cdot e_4=-e_4,$ & $e_5 \cdot e_5=\alpha e_1.$  \\ \hline
  $\mathcal{L}_2^{\alpha\neq0}$ & : & $e_2\cdot e_5=e_1,$ &  $e_3\cdot e_5=e_2+\alpha e_1,$ & $e_4 \cdot e_5=e_4,$ &  \\
 & & $e_5\cdot e_2=-e_1,$ & $e_5\cdot e_3= - e_2+\alpha e_1,$  &
  $e_5\cdot e_4=-e_4.$ &   \\ \hline
    $\mathcal{L}_{3}$ & : & $e_2\cdot e_5=e_1,$ &  $e_3\cdot e_5=e_2,$ & $e_4 \cdot e_5=e_4,$ & \\
 & & $e_5\cdot e_2=-e_1,$ & $e_5\cdot e_3= - e_2,$  &
  $e_5\cdot e_4=-e_4,$ &  $e_5 \cdot e_5= e_1.$ \\ \hline
$\mathcal{L}_4^{\alpha}$ &: & $e_3\cdot e_5=e_3,$ & $e_2\cdot e_5=e_1,$ &  $e_4\cdot e_5=e_3+e_4,$ & $e_2\cdot e_2=e_1,$ \\
& & $e_5 \cdot e_3=-e_3,$ & $e_5\cdot e_2=-e_1,$ &  $e_5 \cdot e_4= - e_3 - e_4,$  & $e_5\cdot e_5=\alpha e_1.$ \\
\hline
$\mathcal{L}_5^{\alpha\neq0}$ & : & $e_3 \cdot e_5=e_3,$ &  $e_2\cdot e_5=(\alpha+1) e_1,$  & $e_4\cdot e_5=e_3+e_4,$\\ & & $e_5\cdot e_3=-e_3,$ & $e_5 \cdot e_2=(\alpha-1) e_1,$ & $e_5\cdot e_4=-e_3-e_4.$\\
\hline
$\mathcal{L}_6$ &: & $e_3\cdot e_5=e_3,$ & $e_2\cdot e_5=e_1,$ &  $e_4\cdot e_5=e_3+e_4,$ & \\
& & $e_5 \cdot e_3=-e_3,$ & $e_5\cdot e_2=-e_1,$ &  $e_5 \cdot e_4= - e_3 - e_4,$  & $e_5\cdot e_5=e_1.$ \\
\hline
$\mathcal{L}_7^{\alpha\neq0,\beta}$ & :& $e_3\cdot e_5=e_3,$ & $e_2\cdot e_5=e_1,$ & $e_4 \cdot e_5= \alpha e_4,$ & $e_2 \cdot e_2=e_1,$ \\ & &
$e_5 \cdot e_3=-e_3,$ & $e_5\cdot e_2=-e_1,$ & $e_5 \cdot e_4 = - \alpha e_4,$   & $e_5 \cdot e_5= \beta e_1.$\\ \hline
$\mathcal{L}_8^{\alpha\neq0, \beta\neq0}$ & :& $e_3\cdot e_5=e_3,$ & $e_2\cdot e_5=(\beta+1)e_1,$ & $e_4 \cdot e_5= \alpha e_4,$ \\ & &
$e_5 \cdot e_3=-e_3,$ & $e_5\cdot e_2=(\beta-1)e_1,$ & $e_5 \cdot e_4 = - \alpha e_4.$   \\ \hline
$\mathcal{L}_9^{\alpha\neq0}$ & :& $e_3\cdot e_5=e_3,$ & $e_2\cdot e_5=e_1,$ & $e_4 \cdot e_5= \alpha e_4,$ \\ & &
$e_5 \cdot e_3=-e_3,$ & $e_5\cdot e_2=-e_1,$ & $e_5 \cdot e_4 = - \alpha e_4,$ & $e_5 \cdot e_5= e_1.$  \\ \hline
$\mathcal{L}_{10}^{\alpha\neq0}$ & : & $e_3\cdot e_2=e_1,$ & $e_3\cdot e_5=e_2,$ & $e_4\cdot e_5=e_4,$ & $e_3\cdot e_3= \alpha e_1,$  \\
& & $e_2 \cdot e_3=- e_1,$ & $e_5\cdot e_3=-e_2,$ & $e_5\cdot e_4=-e_4.$ \\ \hline
$\mathcal{L}_{11}^{\alpha}$ & : & $e_3\cdot e_2=e_1,$ & $e_3\cdot e_5=e_2,$ & $e_4\cdot e_5=e_4,$ & $e_3\cdot e_3= \alpha e_1,$  \\
& & $e_2 \cdot e_3=- e_1,$ & $e_5\cdot e_3=-e_2,$ & $e_5\cdot e_4=-e_4,$ & $e_5\cdot e_5= e_1.$\\ \hline
$\mathcal{L}_{12}^{\alpha, \beta}$ & : & $e_3\cdot e_2=e_1,$ &  $e_4\cdot e_5=e_4,$ & $e_3\cdot e_3= \alpha e_1,$ & $e_3\cdot e_5=e_1+e_2,$  \\
& & $e_2 \cdot e_3=- e_1,$ &  $e_5\cdot e_4=-e_4,$ & $e_5\cdot e_5= \beta e_1,$ & $e_5\cdot e_3=e_1-e_2.$\\ \hline
$\mathcal{L}_{13}^{\alpha}$ & : & $e_3\cdot e_2=e_1,$ & $e_2\cdot e_5=e_2,$ &  $e_3\cdot e_5=-e_3,$ & $e_4\cdot e_5= e_1,$   \\
& & $e_2 \cdot e_3=- e_1,$ & $e_5\cdot e_2=-e_2,$ &  $e_5\cdot e_3=e_3,$ & $e_5\cdot e_4= - e_1,$ \\
 & & $e_4\cdot e_4=  e_1,$ & $e_5\cdot e_5= \alpha e_1.$\\ \hline
$\mathcal{L}_{14}^{\alpha\neq 0 }$ & : & $e_3\cdot e_2=e_1,$ & $e_2\cdot e_5=e_2,$  & $e_4\cdot e_5= (\alpha+1)e_1,$ &  $e_3\cdot e_5=-e_3,$  \\
& & $e_2 \cdot e_3=- e_1,$ & $e_5\cdot e_2=-e_2,$ &  $e_5\cdot e_4=(\alpha -1) e_1,$ & $e_5\cdot e_3=e_3.$ \\ \hline
$\mathcal{L}_{15}$ & : & $e_3\cdot e_2=e_1,$ & $e_2\cdot e_5=e_2,$ &  $e_3\cdot e_5=-e_3,$ & $e_4\cdot e_5= e_1,$   \\
& & $e_2 \cdot e_3=- e_1,$ & $e_5\cdot e_2=-e_2,$ &  $e_5\cdot e_3=e_3,$ & $e_5\cdot e_4= - e_1,$ \\ & & $e_5\cdot e_5=  e_1.$\\ \hline
$\mathcal{L}_{16}^{\alpha}$&:& $e_3\cdot e_2=e_1,$ & $e_2\cdot e_5=e_2,$ &  $e_3\cdot e_5=-e_3,$ & $e_4\cdot e_5=\alpha e_4,$ \\ & & $e_2\cdot e_3=-e_1,$ & $e_5\cdot e_2=-e_2,$  &  $e_5 \cdot e_3=e_3,$  & $e_5\cdot e_4=-\alpha e_4,$ \\ & & $e_5\cdot e_5=e_1.$ \\ \hline
$\mathcal{L}_{17}$&:& $e_3\cdot e_2=e_1,$ & $e_2\cdot e_5=-e_2,$ & $e_3\cdot e_5=e_3+e_4,$ & $e_4\cdot e_5= e_4,$  & \\ &  &  $e_2\cdot e_3=-e_1,$ & $e_5\cdot e_2=e_2,$ &  $e_5 \cdot e_3=-e_3-e_4,$ & $e_5\cdot e_4=-e_4,$ \\ & & $ e_5\cdot e_5=e_1.$ \\ \hline
$\mathcal{L}_{18}$ & : & $e_3\cdot e_2=e_1,$ & $e_1\cdot e_5=e_1,$ & $e_2\cdot e_5=e_2,$ &  $e_3\cdot e_5= e_4,$ & \\
& & $e_2 \cdot e_3=- e_1,$ & $e_5\cdot e_1=-e_1,$ & $e_5\cdot e_2=-e_2,$ & $e_5\cdot e_3=- e_4,$ \\ & & $e_5\cdot e_5=  e_4.$ \\ \hline
$\mathcal{L}_{19}$ &:& $e_3\cdot e_2=e_1,$ & $e_1\cdot e_5=e_1,$ & $e_2\cdot e_5=e_2,$ &  $e_3\cdot e_5=2e_4,$ &  \\
& & $e_2 \cdot e_3=- e_1,$ & $e_5\cdot e_1=-e_1,$ & $e_5\cdot e_2=-e_2.$ \\ \hline
$\mathcal{L}_{20}$ & : & $e_3\cdot e_2=e_1,$ & $e_1\cdot e_5=e_1,$ & $e_2\cdot e_5=e_2,$ &  $e_3\cdot e_5=2e_4,$ &  \\
& & $e_2 \cdot e_3=- e_1,$ & $e_5\cdot e_1=-e_1,$ & $e_5\cdot e_2=-e_2,$ & $e_5\cdot e_5= e_4.$ \\ \hline
$\mathcal{L}_{21}$ & : & $e_3\cdot e_2=e_1,$ & $e_1\cdot e_5=e_1,$ & $e_2\cdot e_5=e_2,$ &  $e_3\cdot e_5=e_4,$ \\
& & $e_2 \cdot e_3=- e_1,$ & $e_5\cdot e_1=-e_1,$ & $e_5\cdot e_2=-e_2,$ & $e_5\cdot e_3=- e_4,$ \\ & & $e_3\cdot e_3=  e_4.$  \\ \hline
$\mathcal{L}_{22}$ & : & $e_3\cdot e_2=e_1,$ & $e_1\cdot e_5=e_1,$ & $e_2\cdot e_5=e_2,$ &  $e_3\cdot e_5=e_4,$  \\
& & $e_2 \cdot e_3=- e_1,$ & $e_5\cdot e_1=-e_1,$ & $e_5\cdot e_2=-e_2,$ & $e_5\cdot e_3=- e_4,$ \\ & & $e_3\cdot e_3=  e_4,$ & $e_5\cdot e_5=  e_4.$ \\ \hline
$\mathcal{L}_{23}^{\alpha}$ & : & $e_3\cdot e_2=e_1,$ & $e_1\cdot e_5=e_1,$ & $e_2\cdot e_5=e_2,$ &  $e_3\cdot e_5=2e_4,$ \\
& & $e_2 \cdot e_3=- e_1,$ & $e_5\cdot e_1=-e_1,$ & $e_5\cdot e_2=-e_2,$ & $e_5\cdot e_5= \alpha e_4,$ \\ & &  $e_3\cdot e_3= e_4.$  \\ \hline
$\mathcal{L}_{24}$ & :& $e_4 \cdot e_2=e_1,$ & $e_2\cdot e_4=-e_1,$ & $e_4\cdot e_3=e_2,$ & $e_3\cdot e_4=-e_2,$ \\ & & $e_2\cdot e_5=e_2,$ & $e_5\cdot e_2=-e_2,$ & $e_3 \cdot e_5=-2e_3,$ & $e_5\cdot e_3=2e_3,$ \\ & &$e_4\cdot e_5=e_4,$ &$e_5\cdot e_4=-e_4,$ &$e_5\cdot e_5=e_1.$ \\  \hline
$\mathcal{L}_{25}^{\alpha, \beta, \gamma}$ & :&  $e_2\cdot e_4=e_2,$ & $e_3 \cdot e_5=e_3,$ & $e_4\cdot e_5=(\beta+1)e_1,$ &$e_4\cdot e_4=\alpha e_1,$ \\ &&
$e_4 \cdot e_2=-e_2,$ &
 $e_5\cdot e_3=-e_3,$ & $e_5\cdot e_4=(\beta-1)e_1,$ &$e_5\cdot e_5=\gamma e_1.$ \\  \hline
\end{longtable}

\begin{proof}
\textbf{Case 1.} Let $\mathcal{L}$ be a  five-dimensional complex solvable symmetric Leibniz algebra, whose underlying Lie algebra is
$$\begin{array}{lllllllll}\mathcal{S}_{5.1}: & [e_2,e_5]= e_1, & [e_3,e_5]= e_2, & [e_4,e_5]=e_4.\\[1mm]\end{array}$$

Since  $Z(\mathcal{S}_{5.1}) = \{e_1\},$ then by straightforward computations, we get that the corresponding symmetric bilinear form
$\omega: \mathcal{S}_{5.1}\times \mathcal{S}_{5.1}\rightarrow Z(\mathcal{S}_{5.1})$
satisfying the equation \eqref{eq1.1}  is
$$\omega(e_3, e_3)=\alpha e_1, \quad \omega(e_3, e_5)=\beta e_1, \quad \omega(e_5, e_5)=\gamma e_1, \quad (\alpha, \beta, \gamma) \neq (0, 0,0).$$

Thus, we have the following class of symmetric Leibniz algebras
\begin{longtable}{ll lllllllllllllll}
$\mathcal{L}_{\omega}$ & : & $e_2\cdot e_5=e_1,$ &  $e_3\cdot e_5=e_2+\beta e_1,$ & $e_4 \cdot e_5=e_4,$ & $e_3 \cdot e_3=\alpha e_1,$ & \\
 & & $e_5\cdot e_2=-e_1,$ & $e_5\cdot e_3= - e_2+\beta e_1,$  &
  $e_5\cdot e_4=-e_4,$ &  $e_5 \cdot e_5=\gamma  e_1.$
\end{longtable}

By Proposition \ref{prop2}, we have that two symmetric Leibniz algebras
  $\mathcal{L}_{\omega}$ and $\mathcal{L}_{\mu}$ of this class are isomorphic if and only if there exists an automorphism $T$ of the Lie algebra $\mathcal{S}_{5.1},$ such that
$$\mu(u,v)=T^{-1}\omega(Tu,Tv).$$

 Since the matrix form of the group of automorphisms of the algebra $\mathcal{S}_{5.1}$ is
$$T= \left(
\begin{array}{clcrc}
a_{1}&a_{2}&a_{3}&0&a_{4}\\
0&a_{1}&a_{2}&0&a_{5}\\
0&0&a_{1}&0&a_{6}\\
0&0&0&a_{7}&a_{8}\\
0&0&0&0&1
\end{array}\right),$$
we have the restriction
$$\mu(e_3,e_3)=\alpha a_{1}e_1, \quad \mu(e_3,e_5)=(\alpha a_{6}+\beta)e_1, \quad \mu(e_5,e_5)=\frac{\alpha a_{6}^2+2\beta a_{6}+\gamma}{a_1}e_1.$$

Now, we consider following subcases:
\begin{itemize}
\item Let $\alpha \neq 0,$ then choosing $a_{1}=\frac{1}{\alpha},$
$a_{6}=-\frac{\beta}{\alpha},$ we get that
$$\mu(e_3,e_3)=e_1,\quad \mu(e_3,e_5)= 0,\quad \mu(e_5,e_5)= (\alpha \gamma-\beta^2)e_1$$ and obtain the algebra $\mathcal{L}_1^\alpha.$

\item Let $\alpha = 0,$ then we consider following subcases:
\begin{itemize}
	\item If $\beta \neq 0,$ then choosing $a_{6}=-\frac{\gamma}{2\beta},$ we get that
	$\mu(e_3,e_3)=0,$ $\mu(e_3,e_5)= \beta e_1,$ $\mu(e_5,e_5)= 0$ and obtain the algebra
	$\mathcal{L}_2^{\alpha\neq0}.$
	
	\item If $\beta = 0,$ then $\gamma \neq 0$ and choosing $a_{1}=\gamma,$ we get that
	$\mu(e_3,e_3)=0,$ $\mu(e_3,e_5)= 0,$ $\mu(e_5,e_5)= e_1$ and obtain the algebra $\mathcal{L}_3.$
	
\end{itemize}
\end{itemize}

\textbf{Case 2.} Let $\mathcal{L}$ be a five-dimensional complex solvable symmetric Leibniz algebra, whose underlying Lie algebra is
$$\begin{array}{lllllllll}\mathcal{S}_{5.2}: & [e_2,e_5]= e_1, & [e_3,e_5]= e_3, & [e_4,e_5]=e_3+e_4.\\[1mm]\end{array}$$

Since  $Z(\mathcal{S}_{5.2}) = \{e_1\},$ then by straightforward computations, we get that the corresponding symmetric bilinear form
$\omega: \mathcal{S}_{5.2}\times \mathcal{S}_{5.2}\rightarrow Z(\mathcal{S}_{5.2})$
satisfying the equation \eqref{eq1.1} is
$$\omega(e_2,e_2)=\alpha e_1, \quad \omega(e_2,e_5)=\beta e_1, \quad \omega(e_5,e_5)=\gamma e_1,\quad (\alpha, \beta, \gamma) \neq (0, 0,0).$$

Thus, we have the following class of symmetric Leibniz algebras
\begin{longtable}{ll lllllllllllllll}
$\mathcal{L}_{\omega}$ & : & $e_2\cdot e_5=(\beta+1)e_1,$ & $e_5\cdot e_2=(\beta-1)e_1,$ & $e_3\cdot e_5=e_3,$ & $e_5\cdot e_3= - e_3,$  \\  & & $e_4 \cdot e_5=e_3+e_4,$ & $e_5\cdot e_4=-e_3-e_4,$ & $e_2 \cdot e_2=\alpha e_1,$ & $e_5 \cdot e_5=\gamma  e_1.$
\end{longtable}

 Since the matrix form of the group of automorphisms of the algebra $\mathcal{S}_{5.2}$ is
$$T= \left(
\begin{array}{clcrc}
	a_{1}&a_{2}&0&0&a_{3}\\
	0&a_{1}&0&0&a_{4}\\
	0&0&a_{5}&a_{6}&a_{7}\\
	0&0&0&a_{5}&a_{8}\\
	0&0&0&0&1
\end{array}\right),$$
we have the restriction
$$\mu(e_2,e_2)=\alpha a_{1}e_1,\quad \mu(e_2,e_5)=(\alpha a_{4}+\beta)e_1, \quad \mu(e_5,e_5)=\frac{\alpha a_{4}^2+2\beta a_{4}+\gamma}{a_{1}}e_1.$$

Now, we consider following cases:
\begin{itemize}
\item Let $\alpha \neq 0,$ then choosing $a_{1}=\frac{1}{\alpha},$
$a_{4}=-\frac{\beta}{\alpha},$ we get that
$\mu(e_2,e_2)=e_1,$ $\mu(e_2,e_5)= 0,$ $\mu(e_5,e_5)= (\alpha \gamma-\beta^2)e_1$ and
obtain the algebra $\mathcal{L}_4^{\alpha}.$
		
\item Let $\alpha = 0.$
\begin{itemize}
	\item If $\beta \neq 0,$ then choosing $a_{4}=-\frac{\gamma}{2\beta},$ we get that
	$\mu(e_2,e_2)=0,$ $\mu(e_2,e_5)= \beta e_1,$ $\mu(e_5,e_5)= 0$ and obtain the algebra
	$\mathcal{L}_5^{\alpha\neq0}.$
		
	\item If  $\beta = 0,$ then $\gamma \neq 0$ and choosing $a_{1}=\gamma,$ we get that
	$\mu(e_2,e_2)=0,$ $\mu(e_2,e_5)= 0,$ $\mu(e_5,e_5)= e_1.$
	Thus, we have the algebra $\mathcal{L}_6.$
		
\end{itemize}
\end{itemize}

\textbf{Case 3.} Let $\mathcal{L}$ be a five-dimensional complex solvable symmetric Leibniz algebra, whose underlying Lie algebra is
$$\begin{array}{lllllllll}\mathcal{S}_{5.3}: & [e_2,e_5]= e_1, & [e_3,e_5]= e_3, & [e_4,e_5]=\lambda e_4.\\[1mm]\end{array}$$

Then $Z(\mathcal{S}_{5.3}) = \{e_1\}$ and symmetric bilinear form
$\omega: \mathcal{S}_{5.3}\times \mathcal{S}_{5.3}\rightarrow Z(\mathcal{S}_{5.3})$
satisfying the equation \eqref{eq1.1} is
$$\omega(e_2,e_2)=\alpha e_1, \quad \omega(e_2,e_5)=\beta e_1, \quad \omega(e_5,e_5)=\gamma e_1,\quad (\alpha, \beta, \gamma) \neq (0, 0,0).$$

Thus, we have the following class of symmetric Leibniz algebras
\begin{longtable}{ll lllllllllllllll}
$\mathcal{L}_{\omega}$ & : & $e_2\cdot e_5=(\beta+1)e_1,$ & $e_5\cdot e_2=(\beta-1)e_1,$ & $e_3\cdot e_5=e_3,$ & $e_5\cdot e_3= - e_3,$  \\  & & $e_4 \cdot e_5=\lambda e_4,$ & $e_5\cdot e_4=-\lambda e_4,$ & $e_2 \cdot e_2=\alpha e_1,$ & $e_5 \cdot e_5=\gamma  e_1.$
\end{longtable}

 Since the matrix form of the group of automorphisms of the algebra $\mathcal{S}_{5.3}$ is
$$T= \left(
\begin{array}{clcrc}
	a_{1}&a_{2}&0&0&a_{3}\\
	0&a_{1}&0&0&a_{4}\\
	0&0&a_{5}&0&a_{6}\\
	0&0&0&a_{5}&a_{7}\\
	0&0&0&0&1
\end{array}\right), \ \text{for} \ \lambda \neq0, \quad T= \left(
\begin{array}{clcrc}
	a_{1}&a_{2}&0&0&a_{3}\\
	0&a_{1}&0&0&a_{4}\\
	0&0&a_{5}&b_1&a_{6}\\
	0&0&b_2&a_{5}&a_{7}\\
	0&0&0&0&1
\end{array}\right) \ \text{for} \ \lambda =0,$$
we have the restriction
$$\mu(e_2,e_2)=\alpha a_{1}e_1, \quad \mu(e_2,e_5)=(\alpha a_{4}+\beta)e_1, \quad \mu(e_5,e_5)=\frac{\alpha a_{4}^2+2\beta a_{4}+\gamma}{a_{1}}e_1.$$

Now we consider following cases:
\begin{itemize}
\item Let $\alpha \neq 0,$ then choosing $a_{1}=\frac{1}{\alpha},$
$a_{4}=-\frac{\beta}{\alpha},$ we get that
$\mu(e_2,e_2)=e_1,$ $\mu(e_2,e_5)= 0,$ $\mu(e_5,e_5)= (\alpha \gamma-\beta^2)e_1$ and obtain the algebra $\mathcal{L}_7^{\alpha, \beta}.$
		
\item  Let $\alpha = 0.$
\begin{itemize}
	\item If $\beta \neq 0,$ then choosing $a_{4}=-\frac{\gamma}{2\beta},$ we get that
	$\mu(e_2,e_2)=0,$ $\mu(e_2,e_5)= \beta e_1,$ $\mu(e_5,e_5)= 0$ and obtain the algebra
	$\mathcal{L}_8^{\alpha, \beta\neq0}.$
		
	\item If $\beta = 0,$ then $\gamma \neq 0$ and choosing $a_{1}=\gamma,$ we get that
	$\mu(e_2,e_2)=0,$ $\mu(e_2,e_5)= 0,$ $\mu(e_5,e_5)= e_1$ and obtain the algebra
	$\mathcal{L}_9^{\alpha}.$
		
\end{itemize}
\end{itemize}

\textbf{Case 4.} Let $\mathcal{L}$ be a five-dimensional complex solvable symmetric Leibniz algebra, whose underlying Lie algebra is
$$\begin{array}{lllllllll}\mathcal{S}_{5.14}: & [e_3,e_2]= e_1, & [e_3,e_5]= e_2, & [e_4,e_5]=e_4.\\[1mm]\end{array}$$

Then $Z(\mathcal{S}_{5.14}) = \{e_1\}$ and
$$\omega(e_3,e_3)=\alpha e_1, \quad \omega(e_3,e_5)=\beta e_1, \quad \omega(e_5,e_5)=\gamma e_1,\quad (\alpha, \beta, \gamma) \neq (0, 0,0).$$

Thus, we have the following class of symmetric Leibniz algebras
\begin{longtable}{ll lllllllllllllll}
$\mathcal{L}_{\omega}$ & : & $e_3\cdot e_2=e_1,$ & $e_3\cdot e_5=\beta e_1+e_2,$ & $e_4\cdot e_5=e_4,$ & $e_3\cdot e_3= \alpha e_1,$  \\
& & $e_2 \cdot e_3=- e_1,$ & $e_5\cdot e_3=\beta e_1-e_2,$ & $e_5\cdot e_4=-e_4,$ & $e_5\cdot e_5= \gamma e_1.$
\end{longtable}

 Since the matrix form of the group of automorphisms of the algebra $\mathcal{S}_{5.14}$ is
$$T= \left(
\begin{array}{ccccc}
	a_{1}^2&a_{1}a_{5}&a_{2}&0&a_{3}\\
	0&a_{1}&a_{4}&0&a_{5}\\
	0&0&a_{1}&0&0\\
	0&0&0&a_{6}&a_{7}\\
	0&0&0&0&1
\end{array}\right),$$
we have the restriction
$$\mu(e_3,e_3)=\alpha e_1,\quad \mu(e_3,e_5)=\frac{\beta}{a_{1}}e_1, \quad \mu(e_5,e_5)=\frac{\gamma}{a_{1}^2}e_1.$$

Now we consider following cases:
\begin{itemize}
\item If $\beta =0, \gamma =0,$ then we have the algebra $\mathcal{L}_{10}^{\alpha}.$
	
\item If $\beta =0,$ $\gamma \neq 0,$ then have the algebra $\mathcal{L}_{11}^{\alpha}.$
		
\item If $\beta \neq 0,$ then choosing $a_1=\beta,$ we obtain the algebra $\mathcal{L}_{12}^{\alpha, \beta}.$
		
\end{itemize}

\textbf{Case 5.} Let $\mathcal{L}$ be a  five-dimensional complex solvable symmetric Leibniz algebra, whose underlying Lie algebra is
$$\begin{array}{lllllllll}\mathcal{S}_{5.15}: & [e_3,e_2]= e_1, & [e_2,e_5]= e_2, & [e_3,e_5]=-e_3, & [e_4,e_5]=e_1.\end{array}$$

Then  $Z(\mathcal{S}_{5.15}) = \{e_1\}$ and
$$\omega(e_4,e_4)=\alpha e_1, \quad \omega(e_4,e_5)=\beta e_1, \quad \omega(e_5,e_5)=\gamma e_1,$$
where $(\alpha, \beta, \gamma) \neq (0, 0,0).$

Thus, we have the following class of symmetric Leibniz algebras
\begin{longtable}{ll lllllllllllllll}
$\mathcal{L}_{\omega}$ & : & $e_3\cdot e_2=e_1,$ & $e_2\cdot e_5=e_2,$ &  $e_3\cdot e_5=-e_3,$ & $e_4\cdot e_5=(\beta+1)e_1,$ & $e_4\cdot e_4= \alpha e_1,$  \\
& & $e_2 \cdot e_3=- e_1,$ & $e_5\cdot e_2=-e_2,$ &  $e_5\cdot e_3=e_3,$ & $e_5\cdot e_4=(\beta-1)e_1,$ & $e_5\cdot e_5= \gamma e_1.$
\end{longtable}

By Proposition \ref{prop2}, we have that two symmetric Leibniz algebras
  $\mathcal{L}_{\omega}$ and $\mathcal{L}_{\mu}$ of this class are isomorphic if and only if there exists an automorphism $T$ of the Lie algebra $\mathcal{S}_{5.15},$ such that
$\mu(u,v)=T^{-1}\omega(Tu,Tv).$ Since the matrix form of the group of automorphisms of the algebra $\mathcal{S}_{5.15}$ is
$$T= \left(
\begin{array}{ccccc}
	a_{3}a_5&-a_{3}a_{6}&-a_{4}a_{5}&a_{1}&a_{2}\\
	0&a_{3}&0&0&a_{4}\\
	0&0&a_{5}&0&a_{6}\\
	0&0&0&a_{3}a_5&a_{7}\\
	0&0&0&0&1
\end{array}\right),$$
we have the restriction
$$\mu(e_4,e_4)=a_{3} a_5 \alpha e_1, \quad \mu(e_4,e_5)=(a_{7}\alpha+\beta)e_1, \quad
\mu(e_5,e_5)=\frac{a_{7}^2\alpha + 2a_{7}\beta + \gamma}{a_{3}a_5}e_1.$$

Now we consider following cases:
\begin{itemize}
\item If $\alpha \neq 0,$ then choosing $a_{3}=\frac{1}{\alpha a_5},$
$a_{7}=-\frac{\beta}{\alpha},$ we have the algebra $\mathcal{L}_{13}^{\alpha}.$
		
\item If $\alpha = 0$ and $\beta \neq 0,$ then choosing $a_{7}=-\frac{\gamma}{2\beta},$ we obtain the algebra $\mathcal{L}_{14}^{\alpha\neq0}.$
\item  If $\alpha = 0$ and $\beta = 0,$ then $\gamma \neq 0$ and choosing $a_{7}=\gamma,$ we obtain the algebra $\mathcal{L}_{15}.$
\end{itemize}

\textbf{Case 6.} Let $\mathcal{L}$ be a five-dimensional complex solvable symmetric Leibniz algebra, whose underlying Lie algebra is
$$\begin{array}{lllllllll}\mathcal{S}_{5.17}: & [e_3,e_2]= e_1, & [e_2,e_5]= e_2, & [e_3,e_5]=-e_3, & [e_4,e_5]=\lambda e_4.\\[1mm]\end{array}$$

Since  $Z(\mathcal{S}_{5.17}) = \{e_1\},$ then by straightforward computations, we get that the corresponding symmetric bilinear form
$\omega: \mathcal{S}_{5.17}\times \mathcal{S}_{5.17}\rightarrow Z(\mathcal{S}_{5.17})$
satisfying the equation \eqref{eq1.1} is
$$\omega(e_5,e_5)=\alpha e_1,$$
where $\alpha \neq 0.$ Hence, in this case, we obtain the algebra $\mathcal{L}_{16}^{\alpha\neq0}.$

\textbf{Case 7.} Let $\mathcal{L}$ be a five-dimensional complex solvable symmetric Leibniz algebra, whose underlying Lie algebra is
$$\begin{array}{lllllllll}\mathcal{S}_{5.18}: & [e_3,e_2]= e_1, & [e_2,e_5]=-e_2, & [e_3,e_5]= e_3+e_4, & [e_4,e_5]=e_4.\\[1mm]\end{array}$$

Since  $Z(\mathcal{S}_{5.18}) = \{e_1\},$ then by straightforward computations, we get that the corresponding symmetric bilinear form
$\omega: \mathcal{S}_{5.18}\times \mathcal{S}_{5.18}\rightarrow Z(\mathcal{S}_{5.18})$
satisfying the equation \eqref{eq1.1} is
$$\omega(e_5,e_5)=\alpha e_1 $$
where $\alpha \neq 0.$ In this case we obtain the algebra $\mathcal{L}_{17}^{\alpha}.$

\textbf{Case 8.} Let $\mathcal{L}$ be a  five-dimensional complex solvable symmetric Leibniz algebra, whose underlying Lie algebra is
$$\begin{array}{lllllllll}\mathcal{S}_{5.20}: & [e_3,e_2]= e_1, & [e_1,e_5]=e_1, & [e_2,e_5]=e_2, & [e_3,e_5]=e_4.\\[1mm]\end{array}$$

Since  $Z(\mathcal{S}_{5.20}) = \{e_4\},$ then by straightforward computations, we get that the corresponding symmetric bilinear form
$\omega: \mathcal{S}_{5.20}\times \mathcal{S}_{5.20}\rightarrow Z(\mathcal{S}_{5.20})$
satisfying the equation \eqref{eq1.1} is
$$\omega(e_3,e_3)=\alpha e_4, \quad \omega(e_3,e_5)=\beta e_4,\quad \omega(e_5,e_5)=\gamma e_4,$$
where $(\alpha,\beta ,\gamma) \neq (0,0,0).$

Thus, we have the following class of symmetric Leibniz algebras
\begin{longtable}{ll lllllllllllllll}
$\mathcal{L}_{\omega}$ & : & $e_3\cdot e_2=e_1,$ & $e_1\cdot e_5=e_1,$ & $e_2\cdot e_5=e_2,$ &  $e_3\cdot e_5=(\beta+1)e_4,$ &  $e_3\cdot e_3= \alpha e_4,$  \\
& & $e_2 \cdot e_3=- e_1,$ & $e_5\cdot e_1=-e_1,$ & $e_5\cdot e_2=-e_2,$ & $e_5\cdot e_3=(\beta-1) e_4,$ & $e_5\cdot e_5= \gamma e_4.$
\end{longtable}

Since the matrix form of the group of automorphisms of the algebra $\mathcal{S}_{5.20}$ is
$$T= \left(
\begin{array}{ccccc}
	a_{3}a_{5}&a_{1}&-a_{4}a_{5}&0&a_{2}\\
	0&a_{3}&0&0&a_{4}\\
	0&0&a_{5}&0&0\\
	0&0&a_{6}&a_{7}&a_{8}\\
	0&0&0&0&1
\end{array}\right),$$
we have the restriction
$$\mu(e_3,e_3)=\frac{\alpha a_{5}^2} {a_{7}}  e_4, \quad \mu(e_3,e_5)=\frac{\beta a_{5}} {a_{7}}  e_4, \quad \mu(e_5,e_5)= \frac{\gamma} {a_{7}}  e_4.$$
Now we consider following cases:

\begin{itemize}
\item Let $\alpha = \beta = 0$ and $\gamma \neq 0,$ then choosing $a_{7}=\gamma,$
 we have the algebra $\mathcal{L}_{18}.$
\item Let $\alpha =0,$ $ \beta \neq 0$ and $\gamma = 0,$ then choosing $a_{5}=\frac {a_{7}}{\beta},$ we obtain the algebra $\mathcal{L}_{19}.$
\item Let $\alpha =0,$ $ \beta \neq 0$ and $\gamma \neq 0,$ then choosing $a_{5}=\frac {\gamma}{\beta},$ $a_{7}= \gamma,$
we obtain the algebra $\mathcal{L}_{20}.$
\item Let $\alpha \neq0,$ $ \beta = 0$ and $\gamma = 0,$ then choosing $a_{7}=\alpha a_{5}^2,$
we obtain the algebra $\mathcal{L}_{21}.$
\item Let $\alpha \neq0,$ $ \beta = 0$ and $\gamma \neq 0,$ then choosing $a_{7}=\gamma,$
$a_{5}=\sqrt{\frac {\gamma}{\alpha}},$
we obtain the algebra
$\mathcal{L}_{22}.$
\item Let $\alpha \neq0,$ $ \beta \neq 0,$ then choosing $a_{5}=\frac {\beta}{\alpha},$
$a_{7}=\frac {\beta^2}{\alpha},$
we obtain the algebra $\mathcal{L}_{23}^{\alpha}.$
\end{itemize}

\textbf{Case 9.} Let $\mathcal{L}$ be a five-dimensional complex solvable symmetric Leibniz algebra, whose underline Lie algebra is $$\begin{array}{lllllllll}\mathcal{S}_{5.33}: & [e_4,e_2]= e_1, & [e_4,e_3]=e_2, & [e_2,e_5]=-e_2, & [e_3,e_5]=-2e_3, & [e_4,e_5]=e_4.\\[1mm]\end{array}$$

Since  $Z(\mathcal{S}_{5.33}) = \{e_1\},$ then by straightforward computations, we get that the corresponding symmetric bilinear form
$\omega: \mathcal{S}_{5.33}\times \mathcal{S}_{5.33}\rightarrow Z(\mathcal{S}_{5.33})$
satisfying the equation \eqref{eq1.1} is
$$\omega(e_5,e_5)=\alpha e_1,$$
where $\alpha \neq 0.$ Hence, in this case, we obtain the algebra $\mathcal{L}_{24}.$

\textbf{Case 10.} Let $\mathcal{L}$ be a  five-dimensional complex solvable symmetric Leibniz algebra, whose underlying Lie algebra is
$$\begin{array}{lllllllll}\mathcal{S}_{5.39}: & [e_2,e_4]= e_2, & [e_3,e_5]= e_3, & [e_4,e_5]=e_1.\\[1mm]\end{array}$$
Since  $Z(\mathcal{S}_{5.39}) = \{e_1\},$ then by straightforward computations, we get that the corresponding symmetric bilinear form
$\omega: \mathcal{S}_{5.39}\times \mathcal{S}_{5.39}\rightarrow Z(\mathcal{S}_{5.39})$
satisfying the equation \eqref{eq1.1} is
$$\omega(e_4,e_4)=\alpha e_1, \quad \omega(e_4,e_5)=\beta e_1, \quad \omega(e_5,e_5)=\gamma e_1,$$
where $(\alpha, \beta, \gamma) \neq (0, 0,0).$

The group of automorphisms of the algebra $\mathcal{S}_{5.39}$ is
$$T= \left(
\begin{array}{ccccc}
	1&0&0&a_{1}&a_{2}\\
	0&a_{3}&0&a_{4}&a_5\\
	0&0&a_{6}&a_{7}&a_{8}\\
	0&0&0&1&0\\
	0&0&0&0&1
\end{array}\right) \ \quad \text{or} \quad T= \left(
\begin{array}{ccccc}
	-1&0&0&a_{1}&a_{2}\\
	0&0&a_{3}&a_{4}&a_5\\
	0&a_{6}&0&a_{7}&a_{8}\\
	0&0&0&0&1\\
	0&0&0&1&0
\end{array}\right),$$
 and we have the family of algebras
$\mathcal{L}_{25}^{\alpha, \beta, \gamma}$ with three parameters $\alpha, \beta, \gamma,$ where $\mathcal{L}_{25}^{\alpha, \beta, \gamma}\simeq \mathcal{L}_{25}^{-\gamma, -\beta, -\alpha}.$

\end{proof}

\subsection{Five-dimensional solvable symmetric Leibniz algebras, whose underlying Lie algebra is $\mathcal{S}_{4} \oplus \mathbb{C}$}

In this subsection, we give the classification of five-dimensional solvable symmetric Leibniz algebras, whose underlying Lie algebra is a direct sum of four-dimensional non-split algebra and one-dimensional abelian ideal, i.e., $\mathcal{S}_{4} \oplus \mathbb{C}$.

For this purpose, we give the list of complex $4$-dimensional non-split solvable Lie algebras  \cite{Snobl}:
\begin{longtable}{llllllllllllllll}
$\mathcal{S}_{4.1}$&: & $[e_2,e_4]=e_1,$ & $[e_3,e_4]=e_3.$ \\
$\mathcal{S}_{4.2}$&: & $[e_1,e_4]=e_1,$ & $[e_2,e_4]=e_1+e_2,$ & $[e_3,e_4]=e_2+e_3.$ \\
$\mathcal{S}_{4.3}$&: & $[e_1,e_4]=e_1,$ & $[e_2,e_4]= a e_2,$ & $[e_3,e_4]= b e_3,$ & $0<|b|\leq |a| \leq 1.$ \\
$\mathcal{S}_{4.4}$&: & $[e_1,e_4]=e_1,$ & $[e_2,e_4]= e_1+e_2,$ & $[e_3,e_4]= a e_3,$ & $a\neq 0.$ \\[1mm]
$\mathcal{S}_{4.6}$&: & $[e_2,e_3]=e_1,$ & $[e_2,e_4]=e_2,$ & $[e_3,e_4]=-e_3.$\\
$\mathcal{S}_{4.8}$&: & $[e_2,e_3]=e_1,$ & $[e_1,e_4]=(1+a)e_1,$ & $[e_2,e_4]=e_2,$ & $[e_3,e_4]=a e_3,$ & $0< |a| \leq 1.$ \\
$\mathcal{S}_{4.10}$ & : & $[e_2,e_3]=e_1,$ & $[e_1,e_4]=2e_1,$ & $[e_2,e_4]=e_2,$ & $[e_3,e_4]= e_2+e_3.$\\
$\mathcal{S}_{4.11}$&: & $[e_2,e_3]=e_1,$ & $[e_1,e_4]=e_1,$ & $[e_2,e_4]=e_2.$\\
$\mathcal{S}_{4.12}$&: & $[e_1,e_3]=e_1,$ & $[e_2,e_4]=e_2.$ \\[1mm]
\end{longtable}

\begin{theorem} \label{thm2.2} Let $\mathcal{L}$ be a complex five-dimensional solvable symmetric Leibniz algebra, whose underlying Lie algebra is $\mathcal{S}_{4} \oplus \mathbb{C}$, then it is isomorphic to one of the following pairwise non-isomorphic algebras
\begin{longtable}{llllllllllllllll}
\hline
$\mathcal{L}_{26}^{\alpha,\beta}$ & $:$ & $e_3 \cdot e_4= e_3,$ & $e_2 \cdot e_4= (\alpha +1)e_1,$ & $e_2 \cdot e_2= e_5,$ &\\
 & & $e_4 \cdot e_3 = - e_3,$ & $e_4 \cdot e_2 = (\alpha  - 1) e_1,$ & $e_4 \cdot e_4 = \beta e_1 + e_5.$ \\ \hline
$\mathcal{L}_{27}^{\alpha}$ & $:$ & $e_3 \cdot e_4= e_3,$ & $e_2 \cdot e_4= (\alpha +1)e_1,$ & $e_2 \cdot e_2= e_5,$ &\\
 & & $e_4 \cdot e_3 = - e_3,$ & $e_4 \cdot e_2 = (\alpha  - 1) e_1,$ & $e_4 \cdot e_4 = e_1.$ \\ \hline
$\mathcal{L}_{28}^{\alpha}$ & $:$ & $e_3 \cdot e_4= e_3,$ & $e_2 \cdot e_4= (\alpha +1)e_1,$ & $e_2 \cdot e_2= e_5,$ &\\
 & & $e_4 \cdot e_3 = - e_3,$ & $e_4 \cdot e_2 = (\alpha  - 1) e_1.$ \\ \hline
$\mathcal{L}_{29}^{\alpha}$ & $:$ & $e_3 \cdot e_4= e_3,$ & $e_2 \cdot e_4= e_1 + e_5,$ & $e_2 \cdot e_2= e_1,$ &\\
 & & $e_4 \cdot e_3 = - e_3,$ & $e_4 \cdot e_2 = - e_1 +e_5,$ & $e_4 \cdot e_4 = \alpha e_1.$ \\ \hline
$\mathcal{L}_{30}$ & $:$ & $e_3 \cdot e_4= e_3,$ & $e_2 \cdot e_4= e_1 + e_5,$ & $e_4 \cdot e_4= e_1,$ &\\
 & & $e_4 \cdot e_3 = - e_3,$ & $e_4 \cdot e_2 = - e_1 +e_5.$ \\ \hline
 $\mathcal{L}_{31}$ & $:$ & $e_3 \cdot e_4= e_3,$ & $e_2 \cdot e_4= e_1 + e_5,$ & \\
 & & $e_4 \cdot e_3 = - e_3,$ & $e_4 \cdot e_2 = - e_1 +e_5.$ \\ \hline
 $\mathcal{L}_{32}$ & $:$ & $e_3 \cdot e_4= e_3,$ & $e_2 \cdot e_4= e_1,$ & $e_2 \cdot e_2= e_1,$ &\\
 & & $e_4 \cdot e_3 = - e_3,$ & $e_4 \cdot e_2 = - e_1,$ & $e_4 \cdot e_4 = e_5.$ \\ \hline
$\mathcal{L}_{33}^{\alpha}$ & $:$ & $e_3 \cdot e_4= e_3,$ & $e_2 \cdot e_4= (\alpha +1)e_1,$ & $e_4 \cdot e_4= e_5,$ &\\
 & & $e_4 \cdot e_3 = - e_3,$ & $e_4 \cdot e_2 = (\alpha  - 1) e_1.$ \\ \hline
$\mathcal{L}_{34}^{\alpha}$ & $:$ & $e_3 \cdot e_4= e_3,$ & $e_2 \cdot e_4= e_1,$ & $e_2 \cdot e_2= e_1,$ & $e_5 \cdot e_5= e_1,$ &\\
 & & $e_4 \cdot e_3 = - e_3,$ & $e_4 \cdot e_2 = -e_1,$ & $e_4 \cdot e_4 = \alpha e_1.$
\\ \hline
$\mathcal{L}_{35}^{\alpha}$ & $:$ & $e_3 \cdot e_4= e_3,$ & $e_2 \cdot e_4= (\alpha + 1)e_1,$ & $e_5 \cdot e_5= e_1,$ &\\
 & & $e_4 \cdot e_3 = - e_3,$ & $e_4 \cdot e_2 = (\alpha - 1)e_1.$
\\ \hline
$\mathcal{L}_{36}$ & $:$ & $e_3 \cdot e_4= e_3,$ & $e_2 \cdot e_4= e_1,$ & $e_5 \cdot e_5= e_1,$ &\\
 & & $e_4 \cdot e_3 = - e_3,$ & $e_4 \cdot e_2 = -e_1,$ & $e_4 \cdot e_4 = e_1.$
 \\ \hline
$\mathcal{L}_{37}$ & $:$ & $e_3 \cdot e_4= e_3,$ & $e_2 \cdot e_4= e_1,$ & $e_2 \cdot e_5= e_1,$ \\
 & & $e_4 \cdot e_3 = - e_3,$ & $e_4 \cdot e_2 = -e_1,$ & $e_5 \cdot e_2= e_1.$
 \\ \hline
$\mathcal{L}_{38}$ & $:$ & $e_3 \cdot e_4= e_3,$ & $e_2 \cdot e_4= e_1,$ & $e_2 \cdot e_5= e_1,$ & $e_4 \cdot e_4= e_1,$ \\
 & & $e_4 \cdot e_3 = - e_3,$ & $e_4 \cdot e_2 = -e_1,$ & $e_5 \cdot e_2= e_1.$
 \\ \hline
$\mathcal{L}_{39}$ & $:$ & $e_3 \cdot e_4= e_3,$ & $e_2 \cdot e_4= e_1,$ & $e_4 \cdot e_5= e_1,$ \\
 & & $e_4 \cdot e_3 = - e_3,$ & $e_4 \cdot e_2 = -e_1,$ & $e_5 \cdot e_4= e_1.$
\\ \hline
$\mathcal{L}_{40}$ & $:$ & $e_3 \cdot e_4= e_3,$ & $e_2 \cdot e_4= e_1,$ & $e_4 \cdot e_5= e_1,$& $e_2 \cdot e_2= e_1,$\\
 & & $e_4 \cdot e_3 = - e_3,$ & $e_4 \cdot e_2 = -e_1,$ & $e_5 \cdot e_4= e_1.$
   \\ \hline
 $\mathcal{L}_{41}$ & $:$ & $e_1 \cdot e_4= e_1,$ & $e_2 \cdot e_4= e_1 + e_2,$ & $e_3 \cdot e_4= e_2 + e_3,$ &\\
  & & $e_4 \cdot e_1 = - e_1,$ & $e_4 \cdot e_2 = - e_1 - e_2,$ & $e_4 \cdot e_3 = - e_2 - e_3,$  & $e_4 \cdot e_4=e_5.$ \\ \hline
  $\mathcal{L}_{42}^{a, b}$ & $:$ & $e_1 \cdot e_4= e_1,$ & $e_2 \cdot e_4= a e_2,$ & $e_3 \cdot e_4= b e_3,$ &\\
  & & $e_4 \cdot e_1 = - e_1,$ & $e_4 \cdot e_2 = - a e_2,$ & $e_4 \cdot e_3 = - be_3,$  & $e_4 \cdot e_4=e_5.$ \\ \hline
  $\mathcal{L}_{43}^{\alpha \neq0}$ & $:$ & $e_1 \cdot e_4= e_1,$ & $e_2 \cdot e_4= e_1+ e_2,$ & $e_3 \cdot e_4= \alpha e_3,$ &\\
  & & $e_4 \cdot e_1 = - e_1,$ & $e_4 \cdot e_2 = - e_1- e_2,$ & $e_4 \cdot e_3 = - \alpha e_3,$  & $e_4 \cdot e_4=e_5.$ \\ \hline
  $\mathcal{L}_{44}$ & $:$ & $e_2 \cdot e_3= e_1,$ & $e_2 \cdot e_4= e_2,$ & $e_3 \cdot e_4= -e_3,$ & $e_4 \cdot e_4= e_5,$ &\\
 & & $e_3 \cdot e_2 = - e_1,$ & $e_4 \cdot e_2 = - e_2,$ & $e_4 \cdot e_3= e_3.$\\ \hline
$\mathcal{L}_{45}$ & $:$ & $e_2 \cdot e_3= e_1,$ & $e_2 \cdot e_4= e_2,$ & $e_3 \cdot e_4= - e_3,$ \\
 & & $e_3 \cdot e_2 = - e_1,$ & $e_4 \cdot e_2 = - e_2,$ & $e_4 \cdot e_3 = e_3,$ & $e_5 \cdot e_5= e_1.$ \\ \hline
 $\mathcal{L}_{46}$ & $:$ & $e_2 \cdot e_3= e_1,$ & $e_2 \cdot e_4= e_2,$ & $e_3 \cdot e_4= - e_3,$ & $e_4 \cdot e_4= e_1,$ &\\
 & & $e_3 \cdot e_2 = - e_1,$ & $e_4 \cdot e_2 = - e_2,$ & $e_4 \cdot e_3 = e_3,$ & $e_5 \cdot e_5= e_1.$ \\ \hline
  $\mathcal{L}_{47}$ & $:$ & $e_2 \cdot e_3= e_1,$ & $e_2 \cdot e_4= e_2,$ & $e_3 \cdot e_4= - e_3,$ & $e_4 \cdot e_5=  e_1,$ \\
 & & $e_3 \cdot e_2 = - e_1,$ & $e_4 \cdot e_2 = - e_2,$ & $e_4 \cdot e_3 = e_3,$ & $e_5 \cdot e_4= e_1.$    \\ \hline
  $\mathcal{L}_{48}^{a}$ & $:$ & $e_2 \cdot e_3= e_1,$ & $e_1 \cdot e_4= (a+1) e_1,$ & $e_2 \cdot e_4= e_2,$ & $e_3 \cdot e_4= a e_3,$ &\\
 & & $e_3 \cdot e_2= -e_1,$ & $e_4 \cdot e_1 = -(a+1) e_1,$ & $e_4 \cdot e_2 = - e_2,$ & $e_4 \cdot e_3 = - ae_3,$ \\ & & $e_4 \cdot e_4=e_5.$ \\ \hline
 $\mathcal{L}_{49}$ & $:$ & $e_2 \cdot e_3= e_1,$ & $e_1 \cdot e_4= 2 e_1,$ & $e_2 \cdot e_4= e_2,$ & $e_3 \cdot e_4= e_2+e_3,$ &\\
 & & $e_3 \cdot e_2= -e_1,$ & $e_4 \cdot e_1 = -2 e_1,$ & $e_4 \cdot e_2 = - e_2,$ & $e_4 \cdot e_3 = - e_2-e_3,$  \\ & & $e_4 \cdot e_4=e_5.$\\ \hline
$\mathcal{L}_{50}$ & $:$ & $e_3\cdot e_2=e_1,$ & $e_1\cdot e_4=e_1,$ & $e_2\cdot e_4=e_2,$ &  \\
& & $e_2 \cdot e_3=- e_1,$ & $e_4\cdot e_1=-e_1,$ & $e_4\cdot e_2=-e_2,$ & $e_4\cdot e_4= e_5.$ \\ \hline
$\mathcal{L}_{51}$ & $:$ & $e_3\cdot e_2=e_1,$ & $e_1\cdot e_4=e_1,$ & $e_2\cdot e_4=e_2,$ &  $e_3\cdot e_4= e_5,$  \\
& & $e_2 \cdot e_3=- e_1,$ & $e_4\cdot e_1=-e_1,$ & $e_4\cdot e_2=-e_2,$ & $e_4\cdot e_3= e_5.$  \\ \hline
$\mathcal{L}_{52}$ & $:$ & $e_3\cdot e_2=e_1,$ & $e_1\cdot e_4=e_1,$ & $e_2\cdot e_4=e_2,$ &  $e_3\cdot e_4=  e_5,$  \\
& & $e_2 \cdot e_3=- e_1,$ & $e_4\cdot e_1=-e_1,$ & $e_4\cdot e_2=-e_2,$ & $e_4\cdot e_3= e_5,$ \\ & & $e_4\cdot e_4= e_5.$ \\ \hline
$\mathcal{L}_{53}$ & $:$ & $e_3\cdot e_2=e_1,$ & $e_1\cdot e_4=e_1,$ & $e_2\cdot e_4=e_2,$ &  $e_3\cdot e_3= e_5,$ &   \\
& & $e_2 \cdot e_3=- e_1,$ & $e_4\cdot e_1=-e_1,$ & $e_4\cdot e_2=-e_2.$  \\ \hline
$\mathcal{L}_{54}$ & $:$ & $e_3\cdot e_2=e_1,$ & $e_1\cdot e_4=e_1,$ & $e_2\cdot e_4=e_2,$ &  $e_3\cdot e_3= e_5,$  \\
& & $e_2 \cdot e_3=- e_1,$ & $e_4\cdot e_1=-e_1,$ & $e_4\cdot e_2=-e_2,$ & $e_4\cdot e_4= e_5.$ \\ \hline
$\mathcal{L}_{55}^{\alpha}$ & $:$ & $e_3\cdot e_2=e_1,$ & $e_1\cdot e_4=e_1,$ & $e_2\cdot e_4=e_2,$ &   $e_3\cdot e_4=  e_5,$  \\
& & $e_2 \cdot e_3=- e_1,$ & $e_4\cdot e_1=-e_1,$ & $e_4\cdot e_2=-e_2,$ & $e_4\cdot e_3= e_5,$ \\ & &  $e_3\cdot e_3= e_5,$ & $e_4\cdot e_4= \alpha e_5.$ \\ \hline
$\mathcal{L}_{56}$ & $:$ & $e_1\cdot e_3=e_1,$ & $e_2\cdot e_4=e_2,$ &  \\
& & $e_3\cdot e_1=-e_1,$ & $e_4\cdot e_2=-e_2,$ & $e_3\cdot e_3= e_5.$ \\ \hline
$\mathcal{L}_{57}^{\alpha}$ & $:$ & $e_1\cdot e_3=e_1,$ & $e_2\cdot e_4=e_2,$ &  $e_3\cdot e_3=\alpha e_5,$ &  $e_3\cdot e_4=  e_5,$  \\
& & $e_3\cdot e_1=-e_1,$ & $e_4\cdot e_2=-e_2,$ & $e_4\cdot e_3= e_5.$ \\ \hline
$\mathcal{L}_{58}^{\alpha, \beta}$ & $:$ & $e_1\cdot e_3=e_1,$ & $e_2\cdot e_4=e_2,$ &  $e_3\cdot e_3=\alpha e_5,$ &  $e_3\cdot e_4= \beta e_5,$  \\
& & $e_3\cdot e_1=-e_1,$ & $e_4\cdot e_2=-e_2,$ & $e_4\cdot e_3=\beta e_5,$ & $e_4\cdot e_4= e_5.$
\end{longtable}

\end{theorem}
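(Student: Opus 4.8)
The plan is to apply the correspondence of Proposition~\ref{prop2} in turn to each decomposable Lie algebra $\mathcal{S}_4\oplus\mathbb{C}$, where $\mathbb{C}=\langle e_5\rangle$ and $\mathcal{S}_4$ runs through the nine four-dimensional non-split solvable Lie algebras listed above (one also checks that the remaining four-dimensional non-split solvable Lie algebras of \cite{Snobl} contribute nothing new, having trivial center or admitting only forms $\omega$ whose Leibniz algebra is split, nilpotent, or a Lie algebra). For a fixed $\mathcal{S}_4$, the first step is to note that $Z(\mathcal{S}_4\oplus\mathbb{C})=Z(\mathcal{S}_4)\oplus\mathbb{C}e_5$; the extra central direction $e_5$ is precisely what lets a center-valued form $\omega$ entangle the two summands, so a decomposable Lie algebra can still underlie an indecomposable Leibniz algebra. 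One then solves \eqref{eq1.1} for the symmetric bilinear $\omega\colon(\mathcal{S}_4\oplus\mathbb{C})\times(\mathcal{S}_4\oplus\mathbb{C})\to Z(\mathcal{S}_4\oplus\mathbb{C})$: the linear condition $\omega([x,y],z)=0$ forces $\omega$ to annihilate $[\mathcal{L},\mathcal{L}]$ in its first argument, collapsing $\omega$ to a small triangular array of scalar unknowns on a chosen complement of $[\mathcal{L},\mathcal{L}]$, while the quadratic condition $\omega(\omega(x,y),z)=0$ says $\operatorname{Im}\omega$ must lie in a subspace of the center on which $\omega$ again vanishes. This yields, for each $\mathcal{S}_4$, an explicit parametrized family $\mathcal{L}_\omega$; the normalization keeps $\omega\neq 0$, which is what makes the algebra non-Lie.

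The second step is to compute $\operatorname{Aut}(\mathcal{S}_4\oplus\mathbb{C})$ in matrix form and let it act on the parameters by $\mu(u,v)=T^{-1}\omega(Tu,Tv)$, as in Proposition~\ref{prop2}. Picking orbit representatives — rescaling diagonal entries to $0$ or $1$, removing off-diagonal ones by the shear and translation parts of $\operatorname{Aut}$ — produces the normal forms labeled $\mathcal{L}_{26},\dots,\mathcal{L}_{58}$. Here one discards every orbit whose Leibniz algebra decomposes as a direct sum of ideals, or is nilpotent, since those lie outside the scope of the section (the nilpotent ones being classified in \cite{Alvarez}).

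The third step is to confirm the list is pairwise non-isomorphic. The associated Lie algebra is an isomorphism invariant of the Leibniz algebra and the nine $\mathcal{S}_4\oplus\mathbb{C}$ are mutually non-isomorphic, so algebras coming from different $\mathcal{S}_4$ cannot be isomorphic; for a fixed $\mathcal{S}_4$, two parameter points give isomorphic algebras exactly when they share an $\operatorname{Aut}$-orbit, which is the equivalence already resolved in the second step, and the one- and two-parameter families of the statement are precisely these parameter spaces modulo the action. Where two normal forms might otherwise be confused, they are separated by elementary numerical invariants: the dimensions of the terms of the derived series $L^{[k]}$ and lower central series $L^{k}$, of $Z(L)$, and of $L\cdot L$.

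The main obstacle is the bulk of the case analysis: nine Lie algebras, each demanding its own center, automorphism group, solution space for $\omega$, and orbit decomposition, with $\mathcal{S}_{4.1}$ and $\mathcal{S}_{4.6}$ carrying two-dimensional centers and hence the widest parameter spaces — $\mathcal{S}_{4.1}$ alone accounts for the run $\mathcal{L}_{26}$ through $\mathcal{L}_{40}$ — and there the quadratic constraint $\omega(\omega(x,y),z)=0$ and the automorphism action interact most delicately. Carrying out the normalizations so that no two entries of the final list coincide, and correctly excising the cases that split off a summand, is where the argument needs the most care.
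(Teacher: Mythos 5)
Your plan coincides with the paper's proof: for each of the nine complex non-split four-dimensional solvable Lie algebras $\mathcal{S}_{4}$, the paper computes $Z(\mathcal{S}_{4}\oplus\mathbb{C})$, solves \eqref{eq1.1} for the center-valued symmetric form $\omega$, and then normalizes the resulting parametrized families $\mathcal{L}_{\omega}$ by the action $\mu(u,v)=T^{-1}\omega(Tu,Tv)$ of the explicitly computed automorphism groups, exactly as you describe, with the two-dimensional-center cases $\mathcal{S}_{4.1}\oplus\mathbb{C}$ and $\mathcal{S}_{4.6}\oplus\mathbb{C}$ indeed split into the subcases you anticipate. So the proposal is correct and takes essentially the same approach as the paper.
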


\begin{proof}

\textbf{Case 1.} Let $\mathcal{L}$ be a five-dimensional complex solvable symmetric Leibniz algebra, whose underlying Lie algebra is
$$\begin{array}{lllllll}\mathcal{S}_{4.1}\oplus \mathbb{C}: & [e_2,e_4]=e_1, & [e_3,e_4]=e_3.\end{array}$$

Then, we have $Z(\mathcal{S}_{4.1}\oplus \mathbb{C}) = \{e_1,e_5\}$ and by straightforward computations we get that the corresponding symmetric bilinear form
satisfying the equation \eqref{eq1.1} is
$$\omega(e_2,e_2)=\alpha_{1}e_1+\beta_{1}e_5,\quad \omega(e_2,e_4)=\alpha_{2}e_1+\beta_{2}e_5,\quad \omega(e_4,e_4)=\alpha_{3}e_1+\beta_{3}e_5,$$
where $(\beta_{1},\beta_{2},\beta_{3})\neq (0, 0 , 0)$
or
$$\omega(e_2,e_2)=\alpha_{1}e_1,\quad \omega(e_2,e_4)=\alpha_{2}e_1,\quad \omega(e_4,e_4)=\alpha_{3}e_1,$$
$$\omega(e_2,e_5)=\alpha_{4}e_1,\quad \omega(e_4,e_5)=\alpha_{5}e_1,\quad \omega(e_5,e_5)=\alpha_{6}e_1,$$
where $(\alpha_{4}, \alpha_{5},\alpha_{6})\neq (0, 0 , 0).$

Thus, we consider two subcases. In the first subcase, we have the class of symmetric Leibniz algebras
\begin{longtable}{ll lllllllllllllll}
$\mathcal{L}_{\omega}$ & $:$ & $e_2\cdot e_4=e_1,$ & $e_3\cdot e_4=e_3,$ & $e_2\cdot e_4=\alpha_{2}e_1+\beta_{2}e_5,$ &  $e_2\cdot e_2=\alpha_{1}e_1+\beta_{1}e_5,$  \\
& & $e_4 \cdot e_2=- e_1,$ & $e_4\cdot e_3=-e_3,$ & $e_4\cdot e_2=\alpha_{2}e_1+\beta_{2}e_5,$ & $e_4\cdot e_4= \alpha_{3}e_1+\beta_{3}e_5.$
\end{longtable}

Since the matrix form of the group of automorphisms of the algebra  $\mathcal{S}_{4.1}\oplus \mathbb{C}$ is
$$T= \left(
\begin{array}{ccccc}
a_1&a_2&0&a_3&a_4\\
0&a_1&0&a_5&0\\
0&0&a_6&a_7&0\\
0&0&0&1&0\\
0&a_8&0&a_9&a_{10}
\end{array}\right),$$
for the first subcase we have the restriction
$$\begin{array}{llll}
\mu(e_2,e_2)&=&\frac{a_1(a_{10}\alpha_{1}-a_4\beta_{1})}{a_{10}}e_1+\frac{a_1^2 \beta_{1}}{a_{10}}e_5,\\[1mm]
\mu(e_2,e_4)&=&\frac{a_5(a_{10}\alpha_{1}-a_4\beta_{1})+a_{10}\alpha_{2}-a_4\beta_{2}}{a_{10}}e_1+
\frac{a_{1}(a_5\beta_{1}+\beta_{2})}{a_{10}}e_5,\\[1mm]
\mu(e_4,e_4)&=&\frac{a_5^2(a_{10}\alpha_{1}-a_4\beta_{1})+2a_5(a_{10}\alpha_{2}-a_4\beta_{2})+a_{10}\alpha_{3}-a_4\beta_{3}}{a_1a_{10}}e_1+
\frac{a_5^2\beta_{1}+2a_5\beta_{2}+\beta_{3}}{a_{10}}e_5.\end{array}$$

Now we consider following cases:
\begin{itemize}
\item Let $\beta_{1} \neq0,$ then choosing $a_4=\frac{a_{10}\alpha_{1}}{\beta_{1}},$
    $a_5=-\frac{\beta_{2}}{\beta_{1}},$ $a_{10}=a_1^2\beta_{1},$ we get that $$\mu(e_2,e_2)= e_5, \quad \mu(e_2,e_4)= \delta_{1}e_1, \quad \mu(e_4,e_4)= \frac{\delta_{2}}{a_1}e_1+\frac{\delta_{3}}{a_1^2} e_5,$$
    where $\delta_{1},$ $\delta_{2},$ $\delta_{3}$ are new parameters, which depends on $\alpha_{1}, \alpha_{2}, \alpha_{3}, \beta_{1}, \beta_{2}, \beta_{3}.$
\begin{itemize}
	\item If $\delta_{3} \neq 0,$ then choosing $a_1=\sqrt{\delta_{3}},$ we have the algebra
$\mathcal{L}_{26}^{\alpha, \beta}.$

	\item If $\delta_{3} = 0,$ $\delta_{2}\neq0,$ then choosing $a_1=\delta_{2},$ we have the algebra $\mathcal{L}_{27}^{\alpha}.$
    \item  If $\delta_{2} = 0,$ $\delta_{3}=0,$ then we get the algebra $\mathcal{L}_{28}^{\alpha}.$
\end{itemize}

\item Let $\beta_{1}=0,$ $\beta_{2}\neq0,$ then choosing $a_4=-\frac{a_5a_{10}\alpha_1+a_{10}\alpha_2}{\beta_{2}},$ $a_5=-\frac{\beta_{3}}{2\beta_{2}},$
$a_{10}=a_{1}\beta_{2},$
we get that $$\mu(e_2,e_2)= a_1\alpha_{1}e_1,\quad \mu(e_2,e_4)= e_5, \quad \mu(e_4,e_4)= \frac{\delta} {a_1}e_1.$$
\begin{itemize}
	\item If $\alpha_{1} \neq 0,$ then choosing $a_1=\frac{1}{\alpha_{1}},$ we have the algebra $\mathcal{L}_{29}^{\alpha}.$
		
	\item If $\alpha_{1} = 0,$ $\delta \neq 0,$ then choosing $a_1=\delta$, we have the algebra $\mathcal{L}_{30}.$

\item If $\alpha_{1} = 0,$ $\delta = 0,$ then we obtain  the algebra $\mathcal{L}_{31}.$

\end{itemize}

\item Let $\beta_{1}=0,$ $\beta_{2}=0,$ then $\beta_3\neq0$ and choosing $a_4 =\frac{a_5^2a_{10}\alpha_1 + 2a_5a_{10}\alpha_2}{\beta_3},$ $a_{10}=\beta_{3},$ we have $$\mu(e_2,e_2)= a_1\alpha_{1}e_1,\quad \mu(e_2,e_4)= (a_5\alpha_1+\alpha_2)e_1, \quad \mu(e_4,e_4)= e_5.$$
\begin{itemize}
	\item If $\alpha_{1} \neq 0,$ then choosing $a_1=\frac{1}{\alpha_{1}},$ we have the algebra $\mathcal{L}_{32}.$
		
	\item If $\alpha_{1} = 0,$ then we obtain the algebra $\mathcal{L}_{33}^{\alpha}.$
\end{itemize}
\end{itemize}

Now, we consider the second subcase, i.e., the symmetric bilinear form is  $$\omega(e_2,e_2)=\alpha_{1}e_1,\quad \omega(e_2,e_4)=\alpha_{2}e_1,\quad \omega(e_4,e_4)=\alpha_{3}e_1,$$
$$\omega(e_2,e_5)=\alpha_{4}e_1,\quad \omega(e_4,e_5)=\alpha_{5}e_1,\quad \omega(e_5,e_5)=\alpha_{6}e_1,$$
where $(\alpha_{4}, \alpha_{5},\alpha_{6})\neq (0, 0 , 0).$

Then we have the following restriction
$$\begin{array}{llll}\mu(e_2,e_2)&=&\frac{a_1^2\alpha_{1} + 2a_1a_8\alpha_{4} + a_8^2\alpha_{6}}{a_1}e_1,\\[1mm]
\mu(e_2,e_4)&=&\frac{a_1a_5\alpha_{1} + a_1\alpha_{2} + (a_1a_9+a_5a_8)\alpha_{4} + a_8\alpha_{5} + a_8a_9\alpha_{6}}{a_1}e_1,\\[1mm]
\mu(e_2,e_5)&=&\frac{(a_1\alpha_{4} + a_8\alpha_{6})a_{10}}{a_1}e_1,\\[1mm]
\mu(e_4,e_4)&=&\frac{a_5^2\alpha_{1} + 2a_5\alpha_{2} + \alpha_{3} + 2a_5a_9\alpha_{4} + 2a_9\alpha_{5} + a_9^2\alpha_{6}}{a_1}e_1,\\[1mm]
\mu(e_4,e_5)&=&\frac{(a_5\alpha_{4} + \alpha_{5} + a_9\alpha_{6})a_{10}}{a_1}e_1,\\[1mm]
\mu(e_5,e_5)&=&\frac{a_{10}^2\alpha_{6}}{a_1}e_1.\end{array}$$

\begin{itemize}
\item Let $\alpha_{6} \neq0,$ then choosing $a_{10}=\sqrt{\frac{a_1}{\alpha_{6}}},$ $a_8=-\frac{a_1\alpha_{4}}{\alpha_{6}}$ and
    $a_9=-\frac{a_5\alpha_{4} + \alpha_{5}}{\alpha_{6}},$ we get that $$\mu(e_2,e_2)= a_1\delta_{1}e_1,\quad \mu(e_2,e_4)= (a_5\delta_{1}+\delta_{2})e_1,\quad \mu(e_4,e_4)=\frac{a_5^2\delta_{1} + 2a_5\delta_{2} + \delta_{3}}{a_1}e_1,$$
    $$\mu(e_2,e_5)= 0, \quad \mu(e_4,e_5)= 0, \quad \mu(e_5,e_5)= e_1.$$

\begin{itemize}
	\item If $\delta_{1} \neq 0,$ then choosing $a_1=\frac{1}{\delta_{1}},$ $a_5=-\frac{\delta_{2}}{\delta_{1}},$ we obtain the algebra $\mathcal{L}_{34}^{\alpha}.$

	\item If $\delta_{1} = 0,$ $\delta_{2}\neq0,$ then choosing $a_5=-\frac{\delta_{3}}{2\delta_{2}},$ we have the algebra $\mathcal{L}_{35}^{\alpha}.$

\item If $\delta_{1} = 0,$ $\delta_{2}=0,$ then in the case of $\delta_{3}=0,$ we have the algebra
$\mathcal{L}_{35}^{\alpha=0}$ and in the case of $\delta_{3}\neq 0,$
choosing $a_1=\delta_{3},$ we have the algebra $\mathcal{L}_{36}.$

\end{itemize}

\item Let $\alpha_{6}=0,$ $\alpha_{4}\neq0,$ then choosing $a_{10}=\frac{1}{\alpha_{4}},$ $a_5=-\frac{\alpha_{5}}{\alpha_{4}},$ $a_8=-\frac{a_1\alpha_{1}}{2\alpha_{4}},$ $a_9=\frac{\alpha_{1}\alpha_{5} - \alpha_{2}\alpha_{4}}{\alpha_{4}^2},$ we get
     $$\begin{array}{lll}\mu(e_2,e_2)= 0,& \mu(e_2,e_4)= 0,& \mu(e_4,e_4)=\frac{\delta}{a_1}e_1,\\[1mm]
    \mu(e_2,e_5)= e_1, & \mu(e_4,e_5)= 0, & \mu(e_5,e_5)= 0.\end{array}$$
         Hence, in this case, we obtain the algebras  $\mathcal{L}_{37}$ and  $\mathcal{L}_{38}$
     depending on whether $\delta=0$ or not.

\end{itemize}

\begin{itemize}
\item Let $\alpha_{6}=0,$ $\alpha_{4}=0,$ then $\alpha_{5}\neq0$ and  choosing $a_8=-\frac{a_1a_5\alpha_{1} + a_1\alpha_{2}}{\alpha_{5}},$ $a_9= -\frac{a_5^2\alpha_{1} + 2a_5\alpha_{2} + \alpha_{3}}{2\alpha_{5}},$ $a_{10}=\frac{a_1}{\alpha_{5}},$ we get that
     $$\begin{array}{lll}\mu(e_2,e_2)= a_1 \alpha_1e_1, & \mu(e_2,e_4)= 0, &\mu(e_4,e_4)=0,\\[1mm]
     \mu(e_2,e_5)= 0, & \mu(e_4,e_5)= e_1, & \mu(e_5,e_5)= 0.\end{array}$$
    Hence, in this case, we obtain the algebras  $\mathcal{L}_{39}$ and  $\mathcal{L}_{40}$
     depending on whether $\alpha_1=0$ or not.

\end{itemize}

\textbf{Case 2.} Let $\mathcal{L}$ be a five-dimensional complex solvable symmetric Leibniz algebra, whose underlying Lie algebra is
$$\begin{array}{lllllll}\mathcal{S}_{4.2}\oplus \mathbb{C}: & [e_1,e_4]=e_1, & [e_2,e_4]=e_1+e_2, & [e_3,e_4]=e_2+e_3.\end{array}$$

 Since $Z(\mathcal{S}_{4.2}\oplus \mathbb{C}) = \{e_5\},$ then doing straightforward computations,
we get that the corresponding symmetric bilinear form
satisfying the equation \eqref{eq1.1} is $$\omega(e_4,e_4)=\alpha e_5.$$

Hence, we obtain the algebra $\mathcal{L}_{41}.$

\textbf{Case 3.} Let $\mathcal{L}$ be a five-dimensional complex solvable symmetric Leibniz algebra, whose underlying Lie algebra is
$$\begin{array}{lllllll}\mathcal{S}_{4.3}\oplus \mathbb{C}: & [e_1,e_4]=e_1, & [e_2,e_4]=a e_2, & [e_3,e_4]= b e_3. \end{array}$$

Since $Z(\mathcal{S}_{4.3}\oplus \mathbb{C}) = \{e_5\},$ then by straightforward computations,
we get that the corresponding symmetric bilinear form
satisfying the equation \eqref{eq1.1} is
$$\omega(e_4,e_4)=\alpha e_5.$$

Thus, in this case, we obtain the algebra $\mathcal{L}_{42}^{a, b}.$

\textbf{Case 4.} Let $\mathcal{L}$ be a  five-dimensional complex solvable symmetric Leibniz algebra, whose underlying Lie algebra is
 $$\begin{array}{lllllll}\mathcal{S}_{4.4}\oplus \mathbb{C}: & [e_1,e_4]=e_1, & [e_2,e_4]= e_1+ e_2, & [e_3,e_4]= a e_3. \end{array}$$

Since $Z(\mathcal{S}_{4.4}\oplus \mathbb{C}) = \{e_5\},$ then by straightforward computations,
we get that the corresponding symmetric bilinear form
satisfying the equation \eqref{eq1.1} is
$$\omega(e_4,e_4)=\alpha e_5.$$

Thus, in this case, we obtain the algebra $\mathcal{L}_{43}^{a}.$

\textbf{Case 5.}  Let $\mathcal{L}$ be a complex five-dimensional solvable symmetric Leibniz algebra, whose underlying Lie algebra is
$$\begin{array}{lllllll}\mathcal{S}_{4.6}\oplus \mathbb{C}: & [e_2,e_3]=e_1, & [e_2,e_4]=e_2, & [e_3,e_4]=-e_3.\end{array}$$

Then we have $Z(\mathcal{S}_{4.6}\oplus \mathbb{C}) = \{e_1,e_5\}$ and by straightforward computations,
we get that the corresponding symmetric bilinear form
satisfying the equation \eqref{eq1.1} has the form
$$\omega(e_4,e_4)=\alpha e_1+\beta e_5, \quad \beta\neq 0$$
or
$$\omega(e_4,e_4)=\alpha_1 e_1,  \quad \omega(e_4,e_5)=\alpha_2 e_1, \quad \omega(e_5,e_5)=\alpha_3 e_1, \quad (\alpha_2, \alpha_3)\neq (0, 0).$$

Since the matrix form of the group of automorphisms of the algebra $\mathcal{S}_{4.6}\oplus \mathbb{C}$ is
$$T= \left(
\begin{array}{ccccc}
a_{1}&a_{2}&a_{3}&a_{4}&a_{5}\\
0&a_{6}&0&a_{7}&0\\
0&0&a_{8}&a_{9}&0\\
0&0&0&1&0\\
0&0&0&a_{10}&a_{11}
\end{array}\right),$$
in the first case, we have the restriction
$$\mu(e_4,e_4)=\frac{a_{11}\alpha - a_{5}\beta}{a_{1}a_{11}}e_1+\frac{\beta}{a_{11}}e_5.$$

Then choosing $a_{11}=\beta,$ $a_{5}=\frac{a_{11}\alpha}{\beta},$ we get that $\mu(e_4,e_4)=e_5$ and obtain the algebra $\mathcal{L}_{44}.$

In the second case, we have the restriction
$$\mu(e_4,e_4)=\frac{\alpha_1 + 2a_{10}\alpha_2 + a_{10}^2\alpha_3}{a_{1}}e_1, \quad \mu(e_4,e_5)=\frac{a_{11}(\alpha_2 + a_{10}\alpha_3)}{a_{1}}e_1, \quad
\mu(e_5,e_5)=\frac{a_{11}^2\alpha_3}{a_{1}}e_1.$$

\begin{itemize}
\item Let $\alpha_{3} \neq0,$ then choosing $a_{11}=\sqrt{\frac{a_{1}}{\alpha_{3}}},$ $a_{10}= -\frac{\alpha_{2}}{\alpha_{3}},$
  we get that $\mu(e_4,e_4)= \frac{\alpha}{a_{1}}e_1,$ $\mu(e_4,e_5)= 0$ and $\mu(e_5,e_5)= e_1.$ In this case, we have the algebras $\mathcal{L}_{45}$ and $\mathcal{L}_{46}$ depending on whether $\alpha=0$ or not.

\item Let $\alpha_{3}=0,$ then $\alpha_{2} \neq 0,$ and $a_{10}=-\frac{\alpha_{1}}{2\alpha_{2}},$ we get that
    $\mu(e_4,e_4)=0$ and obtain the algebra $\mathcal{L}_{47}.$

\end{itemize}

\textbf{Case 6.}
Let $\mathcal{L}$ be a five-dimensional complex solvable symmetric Leibniz algebra, whose underlying Lie algebra is
 $$\begin{array}{lllllll}\mathcal{S}_{4.8}\oplus \mathbb{C}: & [e_2, e_3]=e_1, & [e_1,e_4]=(a+1)e_1, & [e_2, e_4]= e_2, & [e_3,e_4]= a e_3. \end{array}$$

Since $Z(\mathcal{S}_{4.8}\oplus \mathbb{C}) = \{e_5\},$ then by straightforward computations,
we get that the corresponding symmetric bilinear form
satisfying the equation \eqref{eq1.1} is
$$\omega(e_4,e_4)=\alpha e_5.$$

Thus, in this case, we obtain the algebra $\mathcal{L}_{48}^{a}.$

\textbf{Case 7.} Let $\mathcal{L}$ be a five-dimensional complex  solvable symmetric Leibniz algebra, whose underlying Lie algebra is
 $$\begin{array}{lllllll}\mathcal{S}_{4.10}\oplus \mathbb{C}: & [e_2, e_3]=e_1, & [e_1,e_4]=2e_1, & [e_2, e_4]= e_2, & [e_3,e_4]= e_2+ e_3. \end{array}$$

Since $Z(\mathcal{S}_{4.10}\oplus \mathbb{C}) = \{e_5\},$ then by straightforward computations, we get that the corresponding symmetric bilinear form
satisfying the equation \eqref{eq1.1} is
$$\omega(e_4,e_4)=\alpha e_5.$$

Thus, in this case we obtain the algebra $\mathcal{L}_{49}.$

\textbf{Case 8.} Let $\mathcal{L}$ be a five-dimensional complex solvable symmetric Leibniz algebra, whose underlying Lie algebra is
 $$\begin{array}{lllllll}\mathcal{S}_{4.11}\oplus \mathbb{C}: & [e_2, e_3]=e_1, & [e_1,e_4]=e_1, & [e_2, e_4]= e_2.\end{array}$$

Since $Z(\mathcal{S}_{4.11}\oplus \mathbb{C}) = \{e_5\},$ then by straightforward computations, we get that the corresponding symmetric bilinear form
satisfying the equation \eqref{eq1.1} is
$$\omega(e_3,e_3)=\alpha e_5,\quad \omega(e_3,e_4)=\beta e_5,\quad \omega(e_4,e_4)=\gamma e_5,$$
where $(\alpha,\beta,\gamma)\neq (0, 0, 0).$

Thus, we have the following class of symmetric Leibniz algebras
\begin{longtable}{ll lllllllllllllll}
$\mathcal{L}_{\omega}$ & $:$ & $e_3\cdot e_2=e_1,$ & $e_1\cdot e_4=e_1,$ & $e_2\cdot e_4=e_2,$ &  $e_3\cdot e_3=\alpha e_5,$ &  $e_3\cdot e_4= \beta e_5,$  \\
& & $e_2 \cdot e_3=- e_1,$ & $e_4\cdot e_1=-e_1,$ & $e_4\cdot e_2=-e_2,$ & $e_4\cdot e_3=\beta e_5,$ & $e_4\cdot e_4= \gamma e_5.$
\end{longtable}

 Since the matrix form of the group of automorphisms of the algebra $\mathcal{S}_{4.11}\oplus \mathbb{C}$ is
$$T= \left(
\begin{array}{ccccc}
a_{3}a_{5}&a_{1}&-a_{4}a_{5}&a_{2}&0\\
0&a_{3}&0&a_{4}&0\\
0&0&a_5&0&0\\
0&0&0&1&0\\
0&0&a_{6}&a_{7}&a_{8}
\end{array}\right),$$
we have the restriction
$$\mu(e_3,e_3)=\frac{\alpha a_{5}^2} {a_{8}}  e_5, \quad \mu(e_3,e_4)=\frac{\beta a_{5}} {a_{8}}  e_5, \quad \mu(e_4,e_4)= \frac{\gamma} {a_{8}}  e_5.$$

Now, we consider the following cases:
\begin{itemize}
\item Let $\alpha = \beta = 0$ and $\gamma \neq 0,$ then choosing $a_{8}=\gamma,$
 we have the algebra $\mathcal{L}_{50}.$

\item Let $\alpha =0,$ $ \beta \neq 0$ and $\gamma = 0,$ then choosing $a_{8}=a_{5}\beta,$ we obtain the algebra $\mathcal{L}_{51}.$

\item Let $\alpha =0,$ $ \beta \neq 0$ and $\gamma \neq 0,$ then choosing $a_{5}=\frac {\gamma}{\beta},$ $a_{8}= \gamma,$
we obtain the algebra $\mathcal{L}_{52}.$

\item Let $\alpha \neq0,$ $ \beta = 0$ and $\gamma = 0,$ then choosing $a_{8}=\alpha a_{5}^2,$
we obtain the algebra $\mathcal{L}_{53}.$

\item Let $\alpha \neq0,$ $ \beta = 0$ and $\gamma \neq 0,$ then choosing $a_{8}=\gamma,$
$a_{5}=\sqrt{\frac {\gamma}{\alpha}},$
we obtain the algebra
$\mathcal{L}_{54}.$

\item Let $\alpha \neq0,$ $ \beta \neq 0,$ then choosing $a_{5}=\frac {\beta}{\alpha},$
$a_{8}=\frac {\beta^2}{\alpha},$
we obtain the algebra $\mathcal{L}_{55}^{\alpha}.$
\end{itemize}

\textbf{Case 9.} Let $\mathcal{L}$ be a five-dimensional complex solvable symmetric Leibniz algebra, whose underlying Lie algebra is
 $$\begin{array}{lllllll}\mathcal{S}_{4.12}\oplus \mathbb{C}: & [e_1, e_3]=e_1, & [e_2,e_4]=e_2.\end{array}$$

Then $Z(\mathcal{S}_{4.12}\oplus \mathbb{C}) = \{e_5\}$ and
$$\omega(e_3,e_3)=\alpha e_5,\quad \omega(e_3,e_4)=\beta e_5,\quad \omega(e_4,e_4)=\gamma e_5,$$
where $(\alpha,\beta,\gamma)\neq (0, 0, 0).$

Thus, we have the following class of symmetric Leibniz algebras
\begin{longtable}{ll lllllllllllllll}
$\mathcal{L}_{\omega}$ & $:$ & $e_1\cdot e_3=e_1,$ & $e_2\cdot e_4=e_2,$ &  $e_3\cdot e_3=\alpha e_5,$ &  $e_3\cdot e_4= \beta e_5,$  \\
& & $e_3\cdot e_1=-e_1,$ & $e_4\cdot e_2=-e_2,$ & $e_4\cdot e_3=\beta e_5,$ & $e_4\cdot e_4= \gamma e_5.$
\end{longtable}

 Since the matrix form of the group of automorphisms of the algebra $\mathcal{S}_{4.12}\oplus \mathbb{C}$ is
$$T= \left(
\begin{array}{ccccc}
a_{1}&0& a_{2}& 0 &0\\
0&a_{3}&0&a_{4}&0\\
0&0&1&0&0\\
0&0&0&1&0\\
0&0&a_{5}&a_{6}&a_{7}
\end{array}\right),$$
we have the restriction
$$\mu(e_3,e_3)=\frac{\alpha}{a_{7}}e_5, \quad \mu(e_3,e_4)=\frac{\beta}{a_{7}}e_5, \quad \mu(e_4,e_4)=\frac{\gamma}{a_{7}}e_5.$$

From this restriction, we get that the first non-vanishing parameter of $(\alpha,\beta,\gamma)$ can be scaled to 1 and obtain the algebras $\mathcal{L}_{56},$ $\mathcal{L}_{57}^{\alpha},$ $\mathcal{L}_{58}^{\alpha, \beta}.$
\end{proof}

\subsection{Five-dimensional solvable symmetric Leibniz algebras, whose underlying Lie algebra is split with more than one-dimensional direct factor}

It should be noted that when the dimension of the
center of the underlying Lie algebra is bigger than two,
then to get non-isomorphic algebras from the given family using the Proposition \ref{prop2}
is more difficult and the calculation increases significantly. Therefore, in this subsection, we also use the standard basis changing method and obtain the complete classification of five-dimensional solvable symmetric Leibniz algebras, when the underlying Lie algebra is split with more than one-dimensional direct factor.

First, we consider the case, when the underlying Lie algebra is a direct sum of three-dimensional non-split algebra and two-dimensional abelian ideal, i.e., $\mathcal{S}_{3} \oplus \mathbb{C}^2$.

\begin{theorem} \label{thm2.3} Let $\mathcal{L}$ be a complex five-dimensional solvable non-split symmetric Leibniz algebra, whose underlying Lie algebra is $\mathcal{S}_{3} \oplus \mathbb{C}^2.$ Then it is isomorphic to one of the following pairwise non-isomorphic algebras
 \begin{longtable}{ll lllllllllllllll}
 \hline
$\mathcal{L}_{59}$ & $:$ & $e_1\cdot e_3=e_1,$ & $e_2\cdot e_3=e_1+e_2,$ &  $e_3\cdot e_3= e_4,$ \\
& & $e_3\cdot e_1=-e_1,$ & $e_3\cdot e_2=-e_1-e_2,$ & $e_3\cdot e_5= e_5,$ & $e_5\cdot e_3= e_5.$\\ \hline
$\mathcal{L}_{60}$ & $:$ & $e_1\cdot e_3=e_1,$ & $e_2\cdot e_3=e_1+e_2,$ &  $e_3\cdot e_4= e_5,$ \\
& & $e_3\cdot e_1=-e_1,$ & $e_3\cdot e_2=-e_1-e_2,$ & $e_4\cdot e_3= e_5.$ \\ \hline
$\mathcal{L}_{61}$ & $:$ & $e_1\cdot e_3=e_1,$ & $e_2\cdot e_3=e_1+e_2,$ &  $e_3\cdot e_3= e_5,$ \\
& & $e_3\cdot e_1=-e_1,$ & $e_3\cdot e_2=-e_1-e_2,$ & $e_4\cdot e_4= e_5.$\\ \hline
$\mathcal{L}_{62}^{a\neq0}$ & $:$ & $e_1\cdot e_3=e_1,$ & $e_2\cdot e_3=a e_2,$ &  $e_3\cdot e_3= e_4,$ \\
& & $e_3\cdot e_1=-e_1,$ & $e_3\cdot e_2=-ae_2,$ & $e_3\cdot e_5= e_5,$ & $e_5\cdot e_3= e_5.$\\ \hline
$\mathcal{L}_{63}^{a\neq0}$ & $:$ & $e_1\cdot e_3=e_1,$ & $e_2\cdot e_3=a e_2,$ &  $e_3\cdot e_4= e_5,$ \\
& & $e_3\cdot e_1=-e_1,$ & $e_3\cdot e_2=-a e_2,$ & $e_4\cdot e_3= e_5.$ \\ \hline
$\mathcal{L}_{64}^{a\neq0}$ & $:$ & $e_1\cdot e_3=e_1,$ & $e_2\cdot e_3= a e_2,$ &  $e_3\cdot e_3= e_5,$ \\
& & $e_3\cdot e_1=-e_1,$ & $e_3\cdot e_2=-a e_2,$ & $e_4\cdot e_4= e_5.$\\ \hline
\end{longtable}

\end{theorem}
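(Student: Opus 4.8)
I would follow the same scheme as in the proofs of Theorems \ref{thm2.1} and \ref{thm2.2}: for each admissible underlying Lie algebra $\mathcal{S}_3\oplus\mathbb{C}^2$ I would solve \eqref{eq1.1} for the general symmetric form $\omega$ valued in the center, and then use Proposition \ref{prop2} together with explicit changes of basis to bring $\omega$ to a normal form. The first point is to decide which $\mathcal{S}_3$ can occur. Among the three-dimensional non-split solvable complex Lie algebras the Heisenberg algebra is nilpotent; since $\omega$ takes values in the Lie center, one checks that the lower central series of $(\mathcal{L},\cdot)$ agrees with that of $(\mathcal{L},[-,-])$ from the third term on, so a nilpotent underlying Lie algebra yields a nilpotent Leibniz algebra, already covered in \cite{Alvarez}. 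Hence only the two non-nilpotent three-dimensional non-split solvable Lie algebras remain as factors, namely the one with $[e_1,e_3]=e_1$, $[e_2,e_3]=e_1+e_2$ and the family with $[e_1,e_3]=e_1$, $[e_2,e_3]=ae_2$, $a\neq 0$; in both cases $Z(\mathcal{S}_3\oplus\mathbb{C}^2)=\langle e_4,e_5\rangle$ and the derived ideal is $\langle e_1,e_2\rangle$.

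Next, for each base Lie algebra I would determine the admissible $\omega$. Since $[\mathcal{L},\mathcal{L}]=\langle e_1,e_2\rangle$, the identity $\omega([x,y],z)=0$ forces $\omega(e_1,-)=\omega(e_2,-)=0$, so $\omega$ is determined by the six values $\omega(e_i,e_j)$ with $i,j\in\{3,4,5\}$, each lying in $\langle e_4,e_5\rangle$; the second identity $\omega(\omega(x,y),z)=0$ then eliminates the incompatible combinations, leaving a multi-parameter family $\mathcal{L}_\omega$ for each base Lie algebra.

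Then I would compute the automorphism group of each $\mathcal{S}_3\oplus\mathbb{C}^2$; every automorphism preserves $Z=\langle e_4,e_5\rangle$ and $[\mathcal{L},\mathcal{L}]=\langle e_1,e_2\rangle$, so it induces an element of $\mathrm{GL}(\langle e_4,e_5\rangle)$ together with an automorphism of $\mathcal{S}_3$ and a ``shear'' sending $e_3$ into $e_3+Z$. By Proposition \ref{prop2}, classifying the $\mathcal{L}_\omega$ up to isomorphism is the same as describing the orbits of this group on the space of admissible $\omega$'s. As the paragraph preceding the theorem indicates, rather than run this orbit computation purely through Proposition \ref{prop2} I would do it by explicit basis changes, stratifying by the rank and shape of the symmetric ``matrix'' $\bigl(\omega(e_i,e_j)\bigr)_{i,j\in\{3,4,5\}}$ --- essentially by whether $\dim\mathrm{Im}(\omega)$ is $1$ or $2$, and by the rank of the restriction $\omega|_{\langle e_4,e_5\rangle\times\langle e_4,e_5\rangle}$. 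Each surviving case produces a single normal form, giving $\mathcal{L}_{59},\mathcal{L}_{60},\mathcal{L}_{61}$ over the first base algebra and $\mathcal{L}_{62}^{a},\mathcal{L}_{63}^{a},\mathcal{L}_{64}^{a}$ over the second.

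Finally, for pairwise non-isomorphism: the two groups have non-isomorphic underlying Lie algebras, so there are no isomorphisms across them; inside each group the invariants $\dim\mathrm{Im}(\omega)=\dim\bigl(L\!\cdot\!L\cap Z(L)\bigr)$, the rank of $\omega|_{Z\times Z}$, and --- for the second group --- the Lie parameter $a$ up to $a\mapsto a^{-1}$ separate the listed algebras, and by the ``if and only if'' in Proposition \ref{prop2} this is conclusive. \textbf{The main obstacle} is exactly this third step: with a two-dimensional center the automorphism group acts on a six-dimensional space of candidate forms $\omega$, and one must be careful neither to overlook a case nor to record one orbit twice --- which is precisely why the argument is run through explicit basis-change bookkeeping rather than Proposition \ref{prop2} alone.
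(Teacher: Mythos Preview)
Your proposal is correct and follows essentially the same route as the paper: identify the two non-nilpotent $\mathcal{S}_3$'s, deduce $\omega(e_1,-)=\omega(e_2,-)=0$ from $\omega([x,y],z)=0$, impose $\omega(\omega(x,y),z)=0$, and normalise by basis changes. The one organisational difference is that the paper packages your ``rank of $\omega|_{\langle e_4,e_5\rangle\times\langle e_4,e_5\rangle}$'' case split as the statement that $\mathcal{V}=\langle e_4,e_5\rangle$ is a two-dimensional ideal of the Leibniz algebra, hence either abelian or isomorphic to $\lambda_2$; this is the same dichotomy (rank $0$ versus rank $1$) but gives a cleaner way to read off the constraints in the non-abelian case, since one can simply assume $\omega(e_4,e_4)=e_5$, $\omega(e_4,e_5)=\omega(e_5,e_5)=0$ from the outset. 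A minor slip: the space of a priori admissible $\omega$'s has twelve parameters (six entries, each in a two-dimensional center), not six, before the quadratic constraints are imposed.
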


\begin{proof}
Since any  three-dimensional solvable Lie algebra is isomorphic to one of the algebras
$$\begin{array}{lllllll}\mathcal{S}_{3.1}: & [e_1,e_3]=e_1, & [e_2,e_3]=e_1+e_2, \\[1mm]
\mathcal{S}_{3.2}: & [e_1,e_3]=e_1, & [e_2,e_3]=ae_2, & a\neq 0,
\end{array}$$
we consider following two cases.

 Let $\mathcal{L}$ be a complex five-dimensional solvable symmetric Leibniz algebra, whose underlying Lie algebra is $\mathcal{G}=\mathcal{S}_{3.1} \oplus \mathbb{C}^2.$
Since the center of the Lie algebra $\mathcal{G}$ is $\{e_4,e_5\},$ we consider symmetric bilinear form
$\omega: \mathcal{G}\times \mathcal{G}\rightarrow \{e_4,e_5\}.$ Put
$$\omega(e_i,e_j)=\alpha_{i,j}e_4+\beta_{i,j}e_5.$$

From the condition $\omega([e_i,e_j],e_k)=0,$ we obtain that $\omega(e_1,e_i)=\omega(e_2,e_i)=0$ for $1\leq i \leq 5.$
Thus, we have that the symmetric Leibniz algebra corresponding to the Lie algebra $\mathcal{S}_{3.1} \oplus \mathbb{C}^2$ has a multiplication
$$\left\{\begin{array}{lllll} e_1  \cdot e_3=e_1, &  e_2 \cdot e_3=e_1+e_2,\quad \\ e_3 \cdot e_1=-e_1, & e_3  \cdot e_2=-e_1-e_2,\\[1mm]
e_i \cdot e_j =\omega(e_i,e_j)=\alpha_{i,j}e_4+\beta_{i,j}e_5, & 3\leq i \leq j\leq 5. \end{array}\right. \eqno (5) $$

From this multiplication we have that the linear space $\mathcal{V}=\{e_4,e_5\}$ is a two-dimensional ideal of symmetric Leibniz algebra. Since any two-dimensional symmetric Leibniz algebra is an abelian or isomorphic to the algebra $\lambda_2$ with multiplication $x \cdot  x = y,$ we can consider following subcases.

\begin{itemize}
\item Let $\mathcal{V}$ be an abelian algebra, then we have $$\omega(e_4,e_4)=\omega(e_4,e_5)=\omega(e_5,e_5)=0.$$

Moreover, from the condition $\omega(\omega(e_i, e_j), e_k)=0,$ we have that
$$\left\{\begin{array}{lllll}\alpha_{3,3}\alpha_{3,4}+\beta_{3,3}\alpha_{3,5}=0, & \alpha_{3,3}\beta_{3,4}+\beta_{3,3}\beta_{3,5}=0,\\[1mm]\alpha_{3,4}^2+\beta_{3,4}\alpha_{3,5}=0, & \alpha_{3,4}\beta_{3,4}+\beta_{3,4}\beta_{3,5}=0,\\[1mm]
\alpha_{3,4}\alpha_{3,5}+\beta_{3,5}\alpha_{3,5}=0, & \alpha_{3,5}\beta_{3,4}+\beta_{3,5}^2=0.\end{array}\right.\eqno (6)$$
\begin{itemize}

\item Let $(\alpha_{3,3},\beta_{3,3})\neq(0,0),$ then making the change $e'_4=\alpha_{3,3}e_4+\beta_{3,3}e_5$ in (5), we can suppose $\alpha_{3,3}=1,$ $\beta_{3,3}=0.$ Then from the equalities (6), we have that $\alpha_{3,4}=\beta_{3,4}=\beta_{3,5}=0.$
        If $\alpha_{3,5}=0,$ then we have the split algebra.
                Thus, $\alpha_{3,5}\neq0,$ and choosing $e'_5=\frac{1}{\alpha_{3,5}}e_5,$ we obtain the algebra $\mathcal{L}_{59}.$

\item Let $(\alpha_{3,3},\beta_{3,3})=(0,0).$
    If $\beta_{3,4}=\alpha_{3,5}=0,$ then from (6) it follows that $\alpha_{3,4}=\beta_{3,5}=0,$ and we obtain the split algebra. Thus, we may assume $(\beta_{3,4},\alpha_{3,5})\neq(0,0)$ and taking into account of the symmetrically $e_4$ and $e_5,$ we can suppose $\beta_{3,4}\neq0.$ Then making the change $e_5'=\alpha_{3,4}e_4+\beta_{3,4}e_5,$ we can assume $\alpha_{3,4}=0,$ $\beta_{3,4}=1$ and from (6), we have $\alpha_{3,5}=\beta_{3,5}=0.$ Hence, we obtain the algebra $\mathcal{L}_{60}.$

\end{itemize}

\item  Let $\mathcal{V}$ be isomorphic to the algebra $\lambda_2,$ then we can suppose
$$\omega(e_4,e_4)=e_5, \quad \omega(e_4,e_5)=\omega(e_5,e_5)=0.$$

From the condition $\omega(\omega(e_i,e_j),e_k)=0,$ we have that $\alpha_{3,3}=\alpha_{3,4}=\alpha_{3,5}=\beta_{3,5}=0.$
Moreover, making the change $e'_3=e_3-\beta_{3,4} e_4,$ we can suppose $\beta_{3,4}=0$ and in case of $\beta_{3,3}=0,$ we have the split algebra. Thus,
 $\beta_{3,3}\neq0$ and making the change $$e'_1=\beta_{3,3} e_1, \ e'_2=\beta_{3,3} e_2, \ e'_3= e_3, \ e'_4=\sqrt{\beta_{3,3}} e_4, \ e'_5=\beta_{3,3} e_5,$$ we obtain the algebra $\mathcal{L}_{61}.$

 \end{itemize}

 Similarly, in the case, when the underlying Lie algebra is $\mathcal{S}_{3.2} \oplus \mathbb{C}^2,$ we obtain the algebras $\mathcal{L}_{62}^{a},$ $\mathcal{L}_{63}^{a}$ and $\mathcal{L}_{64}^{a}.$
\end{proof}

Now, we consider five-dimensional complex solvable symmetric Leibniz algebras, whose underlying Lie algebra is $\mathcal{S}_{2.1} \oplus \mathbb{C}^3,$ where $\mathcal{S}_{2.1}$ is a two-dimensional solvable Lie algebra with multiplication $[e_1,e_2]=e_1.$

\begin{theorem} \label{thm2.4} Let $\mathcal{L}$ be a complex five-dimensional solvable non-split symmetric Leibniz algebra, whose underlying Lie algebra is $\mathcal{S}_{2.1} \oplus \mathbb{C}^3.$ Then it is isomorphic to one of the following pairwise non-isomorphic algebras
\begin{longtable}{llllllll} \hline
$\mathcal{L}_{65}$ & $:$ & $e_1\cdot e_2=e_1,$ & $e_3\cdot e_2=e_5,$ & $e_2\cdot e_1=-e_1,$ &  $e_2\cdot e_3=e_5,$& $e_3\cdot e_3=e_4.$ \\ \hline
$\mathcal{L}_{66}$ & $:$ &$e_1\cdot e_2=e_1,$ & $e_3\cdot e_2=e_5,$ & $e_2\cdot e_2=e_4,$\\
&& $e_2\cdot e_1=-e_1,$ &  $e_2\cdot e_3=e_5,$ & $e_3\cdot e_3=e_4.$ \\ \hline
$\mathcal{L}_{67}$ & $:$ & $e_1\cdot e_2=e_1,$ & $e_2\cdot e_5=e_4,$ & $e_3\cdot e_3=e_4,$ &$e_2\cdot e_1=-e_1,$ &  $e_5\cdot e_2=e_4.$ \\ \hline
$\mathcal{L}_{68}$ & $:$ & $e_1\cdot e_2=e_1,$ & $e_3\cdot e_4=e_5,$ & $e_2\cdot e_2=e_5$& $e_2\cdot e_1=-e_1,$ &  $e_4\cdot e_3=e_5.$ \\ \hline
\end{longtable}
\end{theorem}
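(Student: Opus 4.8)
The plan is to follow exactly the method used in Theorems \ref{thm2.1}--\ref{thm2.3}: a five-dimensional symmetric Leibniz algebra whose underlying Lie algebra is $\mathcal{S}_{2.1}\oplus\mathbb{C}^3$ is determined by a symmetric bilinear form $\omega$ valued in the center $Z(\mathcal{S}_{2.1}\oplus\mathbb{C}^3)$, subject to \eqref{eq1.1}, and two such forms give isomorphic algebras iff they are related by an automorphism of the Lie algebra as in Proposition \ref{prop2}. First I would fix the basis so that $[e_1,e_2]=e_1$ and $e_3,e_4,e_5$ span the $\mathbb{C}^3$ factor; then $Z=\{e_1,e_3,e_4,e_5\}$, a four-dimensional space. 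Writing $\omega(e_i,e_j)=\sum_{k\in\{1,3,4,5\}}\lambda^k_{i,j}e_k$, the condition $\omega([x,y],z)=0$ forces every product involving $e_1$ to vanish (since $e_1=[e_1,e_2]$), so $\omega$ is supported on the subspace $W=\{e_2,e_3,e_4,e_5\}$ and, moreover, lands in $\{e_3,e_4,e_5\}=Z\cap W$ — indeed $\omega(e_i,e_j)$ cannot have an $e_1$-component because $\omega(e_i,e_j)$ would then fail $\omega(\omega(e_i,e_j),e_k)=0$ unless that component is killed, but $e_1\notin$ the radical of the relevant quadratic conditions; one checks this directly. So effectively $\omega$ is a symmetric bilinear form on the four-dimensional abelian space $W$ with values in the three-dimensional abelian space $V=\{e_3,e_4,e_5\}$, and the image algebra structure on $V$ (given by $\omega|_{V\times V}$) must itself be one of the low-dimensional symmetric Leibniz algebras, hence $V$ is abelian or isomorphic to $\lambda_2$.

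Next I would split according to the structure induced on $V=\{e_3,e_4,e_5\}$, mirroring the case split in the proof of Theorem \ref{thm2.3}. In the abelian case, $\omega(e_i,e_j)$ for $i,j\in\{3,4,5\}$ all vanish, and the remaining data is $\omega(e_2,e_2)$, $\omega(e_2,e_3)$, $\omega(e_2,e_4)$, $\omega(e_2,e_5)$, subject to the quadratic constraints $\omega(\omega(e_i,e_j),e_k)=0$; since the images already lie in $V$ which is abelian, these are automatically satisfied, so the only remaining reductions come from the automorphism group. In the $\lambda_2$-case one normalizes, say, $\omega(e_5,e_5)=e_4$ (after a linear change within $V$) with all other products inside $V$ zero, and then the constraints $\omega(\omega(e_i,e_j),e_k)=0$ cut down the allowed $\omega(e_2,-)$; this is where the products $\mathcal{L}_{65}$, $\mathcal{L}_{66}$, $\mathcal{L}_{68}$ come from (with $e_3\cdot e_3=e_4$ playing the role of the $\lambda_2$ piece, up to relabeling $e_4\leftrightarrow e_5$), while $\mathcal{L}_{67}$ with $e_2\cdot e_5=e_4$, $e_3\cdot e_3=e_4$ arises similarly.

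The reduction to the four listed normal forms is then carried out using the automorphism group of $\mathcal{S}_{2.1}\oplus\mathbb{C}^3$, which is large: $e_1\mapsto a e_1$, $e_2\mapsto e_2+(\text{multiple of }e_1)$, and an arbitrary $\mathrm{GL}_3$ acting on $\{e_3,e_4,e_5\}$ together with arbitrary shifts of $e_2$ by elements of $\{e_3,e_4,e_5\}$ and of $e_3,e_4,e_5$ by multiples of $e_1$. Using the $\mathrm{GL}_3$-action one brings the "quadratic part" $\omega|_{V\times V}$ to one of the two canonical forms (zero or rank-one); using the $e_2\mapsto e_2+v$ shifts one removes the components of $\omega(e_2,e_2)$ (and part of $\omega(e_2,e_i)$) lying in the image of the map $v\mapsto 2\omega(e_2,v)+\omega(v,v)$; and scaling $e_1$ and the basis of $V$ normalizes the surviving coefficients to $1$. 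I expect the main obstacle to be the bookkeeping in the $\lambda_2$-case: there one must simultaneously track how the residual $\mathrm{GL}_2$-stabilizer of the rank-one form $\omega(e_5,e_5)=e_4$ acts on the vector $(\omega(e_2,e_2),\omega(e_2,e_3),\omega(e_2,e_4),\omega(e_2,e_5))$, verify that the quadratic constraints $\omega(\omega(e_i,e_j),e_k)=0$ force $\omega(e_2,e_4)=0$ and restrict $\omega(e_2,e_5)$, and then show the four surviving orbit representatives are pairwise non-isomorphic (by comparing, e.g., $\dim L^2$, $\dim(\text{center of }L)$, and the rank of the symmetric form $u\circ v$). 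These invariants also serve to separate the $\mathcal{L}_{65}$--$\mathcal{L}_{68}$ from all algebras appearing in Theorems \ref{thm2.1}--\ref{thm2.3}, completing the classification.
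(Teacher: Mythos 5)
Your general framework (a symmetric form $\omega$ with values in the center, constrained by \eqref{eq1.1}, reduced modulo automorphisms, with a case split according to the commutative structure induced on the central ideal $\mathcal{V}=\operatorname{span}\{e_3,e_4,e_5\}$) is the paper's method, but the execution has genuine gaps. A first, smaller one: the center of $\mathcal{S}_{2.1}\oplus\mathbb{C}^3$ is $\operatorname{span}\{e_3,e_4,e_5\}$, not a four-dimensional space containing $e_1$, since $[e_1,e_2]=e_1\neq 0$. Your proposed fix — that an $e_1$-component of $\omega$ would be excluded by the quadratic conditions — is false: because $\omega(e_1,\cdot)=0$ is already forced by $\omega([x,y],z)=0$, a form such as $\omega(e_2,e_2)=e_1$ (all else zero) satisfies \eqref{eq1.1} and yet does not give a symmetric Leibniz algebra (condition (b) of the characterization quoted from \cite{Barreiro} fails, and the left Leibniz identity fails already for $x=y=z=e_2$). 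The $e_1$-component is absent simply because $\omega$ must land in the true center.

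The main gap is the case split on $\mathcal{V}$. Since $\mathcal{V}$ is three-dimensional, the induced commutative structure can be abelian, isomorphic to $\lambda_2\oplus\mathbb{C}$ (rank-one form, $e_3\cdot e_3=e_4$), or isomorphic to $\mathcal{N}_1$ (rank-two form with one-dimensional image, $e_3\cdot e_4=e_4\cdot e_3=e_5$); over $\mathbb{C}$ these are distinguished by the rank of $\omega|_{\mathcal{V}\times\mathcal{V}}$. You allow only the first two and assert that $\mathcal{L}_{68}$ arises in the $\lambda_2$-case ``up to relabeling''; it cannot, because its restriction to $\mathcal{V}$ is the rank-two form $e_3\cdot e_4=e_5$, and in the paper $\mathcal{L}_{68}$ is precisely the output of the $\mathcal{N}_1$ case you omit. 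In addition, your claim that in the abelian case the conditions $\omega(\omega(e_i,e_j),e_k)=0$ are ``automatically satisfied'' is wrong: $\omega(v,e_2)$ need not vanish for $v\in\mathcal{V}$, so, e.g., $\omega(\omega(e_2,e_2),e_2)=0$ is a nontrivial restriction. These quadratic relations (systems (7)--(8) in the paper), together with a Jordan-form normalization of the action of $e_2$ on $\mathcal{V}$, are exactly what proves that the abelian case produces only split algebras — a fact your outline neither states nor establishes, and without which neither the completeness of the four-item list nor the absence of extra families is justified.
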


\begin{proof}
Let $\mathcal{G}=\mathcal{S}_{2.1} \oplus \mathbb{C}^3$ be a Lie algebra with a basis $\{e_1,e_2,e_3,e_4,e_5\},$ such that $[e_1,e_2]=e_1.$
Since the center of the Lie algebra is $\operatorname{span}\{e_3,e_4,e_5\},$ we consider symmetric bilinear form
$\omega: \mathcal{G}\times \mathcal{G}\rightarrow \{e_3,e_4,e_5\}.$ Put
$$\omega(e_i,e_j)=\alpha_{i,j}e_3+\beta_{i,j}e_4+\gamma_{i,j}e_5.$$

From the condition $\omega([e_i,e_j],e_k)=0,$ we obtain that $\omega(e_1,e_i)=0$ for $1\leq i \leq 5.$
Thus, we have that the symmetric Leibniz algebra has a multiplication
$$\left\{\begin{array}{lllll}e_1 \cdot e_2=e_1,\quad e_2 \cdot e_1=-e_1,\\[1mm] e_i \cdot e_j=\omega(e_i, e_j)=\alpha_{i,j}e_3+\beta_{i,j}e_4+\gamma_{i,j}e_5, \quad 2\leq i \leq j\leq 5. \end{array}\right.$$

From this multiplication we have that $\mathcal{V}=\{e_3,e_4,e_5\}$ is a three-dimensional commutative ideal of symmetric Leibniz algebra. It is known that three-dimensional commutative symmetric Leibniz algebra is either abelian or isomorphic to $\lambda_2$ or ${\mathcal N}_1.$

Thus, we can consider following cases.
\begin{itemize}
\item Let $\mathcal{V}$ be an abelian algebra, then we have $$\omega(e_3,e_3)=\omega(e_3,e_4)=\omega(e_3,e_5)=\omega(e_4,e_4)=\omega(e_4,e_5)=\omega(e_5,e_5)=0.$$
Now from the condition $\omega(\omega(e_i,e_j),e_k)=0,$ we have that
$$\left\{\begin{array}{lllll}\alpha_{2,2}\alpha_{2,3}+\beta_{2,2}\alpha_{2,4}+\gamma_{2,2}\alpha_{2,5}=0,
&\alpha_{2,2}\beta_{2,3}+\beta_{2,2}\beta_{2,4}+\gamma_{2,2}\beta_{2,5}=0,\\[1mm]
\alpha_{2,2}\gamma_{2,3}+\beta_{2,2}\gamma_{2,4}+\gamma_{2,2}\gamma_{2,5}=0, & \alpha_{2,3}^2+\beta_{2,3}\alpha_{2,4}+\gamma_{2,3}\alpha_{2,5}=0,\\[1mm] \alpha_{2,3}\beta_{2,3}+\beta_{2,3}\beta_{2,4}+\gamma_{2,3}\beta_{2,5}=0,& \alpha_{2,3}\gamma_{2,3}+\beta_{2,3}\gamma_{2,4}+\gamma_{2,3}\gamma_{2,5}=0,\\[1mm] \alpha_{2,4}\alpha_{2,3}+\beta_{2,4}\alpha_{2,4}+\gamma_{2,4}\alpha_{2,5}=0, & \alpha_{2,4}\beta_{2,3}+\beta_{2,4}^2+\gamma_{2,4}\beta_{2,5}=0,\\[1mm]
\alpha_{2,4}\gamma_{2,3}+\beta_{2,4}\gamma_{2,4}+\gamma_{2,4}\gamma_{2,5}=0,& \alpha_{2,5}\alpha_{2,3}+\beta_{2,5}\alpha_{2,4}+\gamma_{2,5}\alpha_{2,5}=0,\\[1mm]
\alpha_{2,5}\beta_{2,3}+\beta_{2,5}\beta_{2,4}+\gamma_{2,5}\beta_{2,5}=0,& \alpha_{2,5}\gamma_{2,3}+\beta_{2,5}\gamma_{2,4}+\gamma_{2,5}^2=0_.\end{array}\right.\eqno (7)$$
\begin{itemize}
\item  Let $(\alpha_{2,2},\beta_{2,2},\gamma_{2,2})\neq(0,0,0),$ then making the change $e'_3=\alpha_{2,2}e_3+\beta_{2,2}e_4+\gamma_{2,2}e_5,$ we can suppose $\alpha_{2,2}=1,$ $\beta_{2,2}=0,$ $\gamma_{2,2}=0$ and from (7) we have that
$$\left\{\begin{array}{lllll}
\alpha_{2,3}=0, & \beta_{2,3}=0, & \gamma_{2,3}=0, & \\[1mm]
\beta_{2,4}\alpha_{2,4}+\gamma_{2,4}\alpha_{2,5}=0,& \beta_{2,4}^2+\gamma_{2,4}\beta_{2,5}=0,& \beta_{2,4}\gamma_{2,4}+\gamma_{2,4}\gamma_{2,5}=0, \\[1mm] \beta_{2,5}\alpha_{2,4}+\gamma_{2,5}\alpha_{2,5}=0,& \beta_{2,5}\beta_{2,4}+\gamma_{2,5}\beta_{2,5}=0, & \beta_{2,5}\gamma_{2,4}+\gamma_{2,5}^2=0.\end{array}\right.$$

Moreover, taking the suitable basis of the vector space $\operatorname{span}\{e_4, e_5\},$ we can suppose $\beta_{2,5} =0,$ which implies $\beta_{2,4}=\gamma_{2,5}=0$ and $\alpha_{2,5}\gamma_{2,4}=0.$
In the case of $\alpha_{2,5}=0,$ we obtain the split algebra, and in the case of $\alpha_{2,5}\neq 0,$ which implies $\gamma_{2,4}=0,$ making the change $e_4'=e_4 - \frac{\alpha_{2,4}}{\alpha_{2,5}} e_5,$ we again obtain the split algebra.

\item  Let $(\alpha_{2,2},\beta_{2,2},\gamma_{2,2})=(0,0,0),$ then we have that

$$\left\{\begin{array}{lllll} \alpha_{2,3}^2+\beta_{2,3}\alpha_{2,4}+\gamma_{2,3}\alpha_{2,5}=0, & \alpha_{2,3}\beta_{2,3}+\beta_{2,3}\beta_{2,4}+\gamma_{2,3}\beta_{2,5}=0,\\[1mm] \alpha_{2,3}\gamma_{2,3}+\beta_{2,3}\gamma_{2,4}+\gamma_{2,3}\gamma_{2,5}=0,& \alpha_{2,4}\alpha_{2,3}+\beta_{2,4}\alpha_{2,4}+\gamma_{2,4}\alpha_{2,5}=0,\\[1mm] \alpha_{2,4}\beta_{2,3}+\beta_{2,4}^2+\gamma_{2,4}\beta_{2,5}=0,& \alpha_{2,4}\gamma_{2,3}+\beta_{2,4}\gamma_{2,4}+\gamma_{2,4}\gamma_{2,5}=0,\\[1mm] \alpha_{2,5}\alpha_{2,3}+\beta_{2,5}\alpha_{2,4}+\gamma_{2,5}\alpha_{2,5}=0,& \alpha_{2,5}\beta_{2,3}+\beta_{2,5}\beta_{2,4}+\gamma_{2,5}\beta_{2,5}=0,\\[1mm] \alpha_{2,5}\gamma_{2,3}+\beta_{2,5}\gamma_{2,4}+\gamma_{2,5}^2=0_.\end{array}\right.\eqno (8)$$

Here we can consider the operator of $ad_{e_2}$ as a linear map of the vector space $\operatorname{span}\{e_3, e_4, e_5\}.$ Choosing the suitable basis of the vector space
$\operatorname{span}\{e_3, e_4, e_5\},$ we can obtain the Jordan form of the matrix of the operator $ad_{e_2}.$ It means that we can always assume $\alpha_{2,4}=\alpha_{2,5}=\beta_{2,5}=\gamma_{2,3}=0.$
Then from the equality (8), we obtain $\alpha_{2,3}=\beta_{2,4}=\gamma_{2,5}=0$ and $\beta_{2,3}\gamma_{2,4}=0.$
So in this case also, we have the split algebra.

\end{itemize}

\item Let the three-dimensional algebra $\mathcal{V}$ be isomorphic to $\lambda_2$, then we have $$\omega(e_3,e_3)=e_4, \quad \omega(e_3,e_4)=\omega(e_3,e_5)=\omega(e_4,e_4)=\omega(e_4,e_5)=\omega(e_5,e_5)=0.$$

    Then from the condition $\omega(\omega(e_i,e_j),e_k)=0,$ we have that
    $$\omega(e_2, e_2) = \beta_{2,2}e_4+\gamma_{2,2}e_5, \quad \omega(e_2, e_3) = \beta_{2,3}e_4+\gamma_{2,3}e_5,\quad \omega(e_2, e_5) = \beta_{2,5}e_4,$$
    with restriction $\gamma_{2,2}\beta_{2,5}=0,$ $\gamma_{2,3}\beta_{2,5}=0.$

    \begin{itemize}
\item Let $\beta_{2,5}=0,$ then we have the multiplication
$$\left\{\begin{array}{lllll}e_1\cdot e_2=e_1, & e_3\cdot e_2=\beta_{2,3}e_4+\gamma_{2,3}e_5, & e_2\cdot e_2=\beta_{2,2}e_4+\gamma_{2,2}e_5,\\[1mm]
 e_2\cdot e_1=-e_1, &  e_2\cdot e_3=\beta_{2,3}e_4+\gamma_{2,3}e_5,& e_3\cdot e_3=e_4. \end{array}\right.$$

   If $\gamma_{2,3}\neq 0,$ $4\beta_{2,2}\gamma_{2,3}^2-4\beta_{2,3}\gamma_{2,2}\gamma_{2,3}+\gamma_{2,2}^2 \neq0,$ then making the change $$e_1'=e_1, \quad e_2'=e_2 + A e_3, \quad e_3'=Be_3, \quad e_4'=B^2e_3, \quad e_5' = B(A+\beta_{2,3})e_4 + B \gamma_{2,3} e_5,$$
   where $A= - \frac{\gamma_{2,2}}{2\gamma_{2,3}},$ $B = \frac{4\beta_{2,2}\gamma_{2,3}^2-4\beta_{2,3}\gamma_{2,2}\gamma_{2,3}+\gamma_{2,2}^2} {2\gamma_{2,3}},$
     we obtain the algebra $\mathcal{L}_{65}.$

If $\gamma_{2,3}\neq 0,$ $4\beta_{2,2}\gamma_{2,3}^2-4\beta_{2,3}\gamma_{2,2}\gamma_{2,3}+\gamma_{2,2}^2 = 0,$ then making the change $$e_1'=e_1, \quad e_2'=e_2 + A e_3, \quad e_3'=e_3, \quad e_4'=e_4, \quad e_5' = (A+\beta_{2,3})e_4 + \gamma_{2,3} e_5,$$
   where $A= - \frac{\gamma_{2,2}}{2\gamma_{2,3}},$ we have the algebra $\mathcal{L}_{66}.$

If $\gamma_{2,3} =0,$ then $\gamma_{2,2} \neq 0,$ and after the change $$e_1'=e_1, \quad e_2'=e_2 + A e_3, \quad e_3'=e_3, \quad e_4'=e_4, \quad e_5' = (\beta_{2,2}+2A\beta_{2,3}+A^2)e_4 + \gamma_{2,2} e_5,$$
where $A = - \beta_{2,3},$ we find out that this algebra is split.

\item Let $\beta_{2,5}\neq0,$ then $\gamma_{2,2}=\gamma_{2,3}=0$ and making
the change $$e_1'=e_1, \quad e_2'=e_2 - \frac{\beta_{2,2}}{2\beta_{2,5}} e_5, \quad e_3'=e_3- \frac{\beta_{2,3}}{\beta_{2,5}}e_5, \quad e_4'=e_4, \quad e_5' = \frac{1}{\beta_{2,5}}e_5,$$
we have the algebra $\mathcal{L}_{67}.$

\end{itemize}

\item Let the three-dimensional algebra $\mathcal{V}$ be isomorphic to ${\mathcal N}_1$, then we have
    $$\omega(e_3,e_4)=e_5, \quad \omega(e_3,e_3)=\omega(e_3,e_5)=\omega(e_4,e_4)=\omega(e_4,e_5)=\omega(e_5,e_5)=0.$$
Now, from the condition $\omega(\omega(e_i,e_j),e_k)=0,$ we have that
$$\omega(e_2, e_2) = \gamma_{2,2}e_5, \quad \omega(e_2, e_3) = \gamma_{2,3}e_5,\quad \omega(e_2, e_4) = \gamma_{2,4}e_5,$$

Making the basis change
$$e_1'=e_1, \quad e_2'=e_2 - \gamma_{2,4} e_3 - \gamma_{2,3} e_4 , \quad e_3'=e_3, \quad e_4'=e_4, \quad e_5' = e_5,$$
we may suppose $\gamma_{2,3}=\gamma_{2,4}=0.$ Then $\gamma_{2,2} \neq 0$ and we obtain the algebra $\mathcal{L}_{68}.$
\end{itemize}
\end{proof}

Now, we consider the case when, underlying Lie algebra is a direct sum of two copies of two-dimensional algebra $\mathcal{S}_{2.1}$ and  one-dimensional abelian ideal, i.e., the Lie algebra $\mathcal{G}$ has the multiplication
 $$\mathcal{G}: [e_1,e_2]=e_1, \quad [e_3,e_4]=e_3.$$

\begin{theorem} \label{thm2.5} Let $\mathcal{L}$ be a complex five-dimensional solvable non-split symmetric Leibniz algebra, whose underlying Lie algebra is $\mathcal{G}$ $([e_1,e_2]=e_1, [e_3,e_4]=e_3)$, then it is isomorphic to one of the following non-isomorphic algebras
 \begin{longtable}{ll lllllllllllllll}
 \hline
$\mathcal{L}_{69}^{\alpha, \beta}$ & $:$ & $e_1\cdot e_2=e_1,$ & $e_3\cdot e_4=e_3,$ & $e_2\cdot e_2= \alpha e_5,$ & $e_2 \cdot e_4= e_5,$\\
& & $e_2\cdot e_1=-e_1,$ & $e_4\cdot e_3=-e_3,$ & $e_4 \cdot e_2= e_5,$ & $e_4\cdot e_4= \beta e_5.$\\ \hline
$\mathcal{L}_{70}^{\alpha}$ & $:$ & $e_1\cdot e_2=e_1,$ & $e_3\cdot e_4=e_3,$ & $e_2 \cdot e_2=\alpha e_5,$ \\
& & $e_2\cdot e_1=-e_1,$ & $e_4\cdot e_3=-e_3,$ & $e_4 \cdot e_4=e_5.$ \\ \hline
\end{longtable}
where $\mathcal{L}_{69}^{\alpha, \beta} \cong \mathcal{L}_{69}^{\beta, \alpha}$ and
$\mathcal{L}_{70}^{\alpha} \cong \mathcal{L}_{70}^{\frac 1{\alpha}}.$
\end{theorem}
\begin{proof}
Since $Z(\mathcal{G}) = \{e_5\},$ then by straightforward computations, we get that the corresponding symmetric bilinear form
satisfying the equation \eqref{eq1.1} is
$$\omega(e_2, e_2)=\alpha e_5,\quad \omega(e_2, e_4)=\beta e_5,\quad \omega(e_4, e_4)=\gamma e_5,$$
where $(\beta,\alpha\gamma)\neq (0, 0).$

Thus, we have the following class of symmetric Leibniz algebras
\begin{longtable}{ll lllllllllllllll}
$\mathcal{L}_{\omega}$ & $:$ & $e_1\cdot e_2=e_1,$ & $e_3\cdot e_4=e_3,$ &  $e_2\cdot e_2=\alpha e_5,$ &  $e_4\cdot e_4= \gamma e_5,$  \\
& & $e_2\cdot e_1=-e_1,$ & $e_4\cdot e_3=-e_3,$ & $e_2\cdot e_4=\beta e_5,$ & $e_4\cdot e_2= \beta e_5.$
\end{longtable}

 Since the matrix form of the group of automorphisms of the algebra $\mathcal{G}$ is
$$T_1= \left(
\begin{array}{ccccc}
a_{1}&a_{2}&0& 0 &0\\
0&1&0&0&0\\
0&0&a_{3}&a_{4}&0\\
0&0&0&1&0\\
0&a_{5}&0&a_{6}&a_{7}
\end{array}\right) \quad \text{or} \quad T_2= \left(
\begin{array}{ccccc}
0&0&a_{3}&a_{4}&0\\
0&0&0&1&0\\
a_{1}&a_{2}&0& 0 &0\\
0&1&0&0&0\\
0&a_{5}&0&a_{6}&a_{7}
\end{array}\right),$$
we have the restriction
$$\mu(e_2,e_2)=\frac{\alpha}{a_{7}}e_5, \quad \mu(e_2,e_4)=\frac{\beta}{a_{7}}e_5, \quad \mu(e_4,e_4)=\frac{\gamma}{a_{7}}e_5$$
or
$$\mu(e_2,e_2)=\frac{\gamma}{a_{7}}e_5, \quad \mu(e_2,e_4)=\frac{\beta}{a_{7}}e_5, \quad \mu(e_4,e_4)=\frac{\alpha}{a_{7}}e_5.$$


If $\beta \neq 0,$ then choosing $a_7=\beta,$ we get the algebra $\mathcal{L}_{69}^{\alpha, \beta}.$

If $\beta = 0,$ then $\alpha \gamma =0$ and choosing $a_7=\gamma,$ we get the algebra $\mathcal{L}_{70}^{\alpha}.$

\end{proof}

Finally, we consider the case when, underlying Lie algebra is a direct sum of two-dimensional solvable algebra $\mathcal{S}_{2.1}$ and three-dimensional Heisenberg algebra, i.e., the Lie algebra $\mathcal{G}$ has the multiplication
 $$\mathcal{G}: [e_1, e_2]=e_1, \quad [e_3, e_4]=e_5.$$

\begin{theorem} \label{thm2.6} Let $\mathcal{L}$ be a complex five-dimensional solvable non-split symmetric Leibniz algebra, whose underlying Lie algebra is $\mathcal{G}$ $([e_1,e_2]=e_1, [e_3,e_4]=e_5)$, then it is isomorphic to one of the following pairwise non-isomorphic algebras
 \begin{longtable}{ll lllllllllllllll}
 $\mathcal{L}_{71}^{\alpha}$ & $:$ & $e_1\cdot e_2=e_1,$ & $e_3\cdot e_4=(\alpha+1)e_5,$ & $e_2\cdot e_2= e_5,$\\
& & $e_2\cdot e_1=-e_1,$ & $e_4\cdot e_3=(\alpha-1)e_5.$ \\ \hline
$\mathcal{L}_{72}^{\alpha}$ & $:$ & $e_1\cdot e_2=e_1,$ & $e_3\cdot e_4=(\alpha+1)e_5,$ & $e_2 \cdot e_4=e_5,$\\
& & $e_2\cdot e_1=-e_1,$ & $e_4\cdot e_3=(\alpha-1)e_5,$ & $e_4 \cdot e_2=e_5.$\\ \hline
$\mathcal{L}_{73}^{\alpha}$ & $:$ & $e_1\cdot e_2=e_1,$ & $e_3\cdot e_4=(\alpha+1)e_5,$ & $e_2 \cdot e_4=e_5,$& $e_2\cdot e_2= e_5,$\\
& & $e_2\cdot e_1=-e_1,$ & $e_4\cdot e_3=(\alpha-1)e_5,$ & $e_4 \cdot e_2=e_5.$\\ \hline
$\mathcal{L}_{74}^{\alpha\neq0, \beta}$ & $:$ & $e_1\cdot e_2=e_1,$ & $e_3\cdot e_4=(\alpha+1)e_5,$ & $e_2 \cdot e_3=e_5,$ & $e_2 \cdot e_4= e_5,$ & $e_2 \cdot e_2= \beta e_5.$\\
& & $e_2\cdot e_1=-e_1,$ & $e_4\cdot e_3=(\alpha-1)e_5,$ & $e_3 \cdot e_2=e_5,$ & $e_4 \cdot e_2= e_5.$\\ \hline
$\mathcal{L}_{75}$ & $:$ & $e_1\cdot e_2=e_1,$ & $e_3\cdot e_4=e_5,$ & $e_2\cdot e_2= e_5,$ \\
& & $e_2\cdot e_1=-e_1,$ & $e_4\cdot e_3=-e_5$ & $e_4\cdot e_4= e_5.$\\ \hline
$\mathcal{L}_{76}^{\alpha}$ & $:$ & $e_1\cdot e_2=e_1,$ & $e_3\cdot e_4=e_5,$ & $e_2\cdot e_2= \alpha e_5,$ & $e_2\cdot e_3= e_5,$\\
& & $e_2\cdot e_1=-e_1,$ & $e_4\cdot e_3=-e_5$ & $e_4\cdot e_4= e_5,$ & $e_3\cdot e_2= e_5.$\\ \hline
$\mathcal{L}_{77}^{\alpha}$ & $:$ & $e_1\cdot e_2=e_1,$ & $e_3\cdot e_4=e_5,$ & $e_2\cdot e_2= \alpha e_5,$ & $e_2\cdot e_4= e_5,$\\
& & $e_2\cdot e_1=-e_1,$ & $e_4\cdot e_3=-e_5$ & $e_4\cdot e_4= e_5,$ & $e_4\cdot e_2= e_5.$\\ \hline
\end{longtable}
where $\mathcal{L}_{74}^{\alpha, \beta} \cong \mathcal{L}_{74}^{-\alpha, -\beta}.$
\end{theorem}
\begin{proof}
Since $Z(\mathcal{G}) = \{e_5\},$ then by straightforward computations, we get that the corresponding symmetric bilinear form
satisfying the equation \eqref{eq1.1} is
$$\omega(e_2,e_2)=\alpha_1 e_5,\quad \omega(e_2,e_3)=\alpha_2 e_5,\quad \omega(e_2,e_4)=\alpha_3 e_5,$$
$$\omega(e_3,e_3)=\alpha_4 e_5,\quad \omega(e_3,e_4)=\alpha_5 e_5,\quad \omega(e_4,e_4)=\alpha_6 e_5,$$
where $(\alpha_1,\alpha_2, \alpha_3 )\neq (0, 0, 0).$

Thus, we have the following class of symmetric Leibniz algebras
\begin{longtable}{ll lllllllllllllll}
$\mathcal{L}_{\omega}$ & $:$ & $e_1\cdot e_2=e_1,$ & $e_3\cdot e_4=(\alpha_5+1)e_5,$ &  $e_2\cdot e_3=\alpha_2 e_5,$ &  $e_2\cdot e_4= \alpha_3 e_5,$  \\
& & $e_2\cdot e_1=-e_1,$ & $e_4\cdot e_3=(\alpha_5-1)e_5,$ & $e_3\cdot e_2=\alpha_2 e_5,$ & $e_4\cdot e_2= \alpha_3 e_5,$ \\
& & $e_2 \cdot e_2=\alpha_1 e_5,$ & $e_3 \cdot e_3=\alpha_4 e_5,$ & $e_4 \cdot e_4=\alpha_6 e_5.$
\end{longtable}

 Since the matrix form of the group of automorphisms of the algebra $\mathcal{G}$ is
$$T= \left(
\begin{array}{ccccc}
a_{1}&a_{2}&0& 0 &0\\
0&1&0&0&0\\
0&0&a_{3}&a_{4}&0\\
0&0&a_{5}&a_{6}&0\\
0&a_{7}&a_{8}&a_{9}&a_{3}a_{6}-a_{4}a_{5}
\end{array}\right),$$
we have the restriction
$$\mu(e_2,e_3)=\frac{a_{3}\alpha_2 + a_{4}\alpha_3}{a_{3}a_{6}-a_{4}a_{5}}e_5, \quad \mu(e_2,e_4)=\frac{a_{5}\alpha_2 +a_{6}\alpha_3}{a_{3}a_{6}-a_{4}a_{5}}e_5,\quad \mu(e_3,e_4)=\frac{a_{4}(a_{3}\alpha_4+a_{5}\alpha_5)+a_{6}(a_{3}\alpha_5+a_{5}\alpha_6)}
{a_{3}a_{6}-a_{4}a_{5}}e_5,$$
$$\mu(e_2,e_2)=\frac{\alpha_1}{a_{3}a_{6}-a_{4}a_{5}}e_5,\quad \mu(e_3,e_3)=\frac{a_{3}^2\alpha_4+2a_3a_{5}\alpha_5+a_{5}^2\alpha_6}
{a_{3}a_{6}-a_{4}a_{5}}e_5,\quad \mu(e_4,e_4)=\frac{a_{4}^{2}\alpha_4+2a_{4}a_{6}\alpha_5+a_{6}^{2}\alpha_6}{a_{3}a_{6}-a_{4}a_{5}}e_5.$$

\begin{itemize}
\item Let $\alpha_5^2-\alpha_4\alpha_6 \neq0,$ then choosing $a_3=- \frac{a_5\alpha_5+a_5\sqrt{\alpha_5^2-\alpha_4\alpha_6}}{\alpha_4},$ $a_4=- \frac{a_6\alpha_5-a_6\sqrt{\alpha_5^2-\alpha_4\alpha_6}}{\alpha_4},$ we obtain
$\mu(e_3,e_3)=\mu(e_4,e_4)=0.$ Thus, in this case, we may suppose $\alpha_4=\alpha_6=0$ and $\alpha_5\neq 0.$ Then we have $a_{4}=a_{5}=0$ (or $a_{3}=a_{6}=0$) and obtain that $$\mu(e_2,e_2)=\frac{\alpha_1}{a_{3}a_{6}}e_5,\quad \mu(e_2, e_3)=\frac{\alpha_2}{a_{6}}e_5,\quad \mu(e_2,e_4)=\frac{\alpha_3}{a_{3}}e_5, \quad \mu(e_3,e_4)=\alpha_5e_5$$
(or $\mu(e_2,e_2)=-\frac{\alpha_1}{a_{4}a_{5}}e_5,\quad \mu(e_2, e_3)=-\frac{\alpha_3}{a_{5}}e_5,\quad \mu(e_2,e_4)=-\frac{\alpha_2}{a_{4}}e_5,\quad \mu(e_3, e_4)=-\alpha_5e_5$).

It follows that in the case of $(\alpha_2, \alpha_3)\neq (0, 0)$ we may suppose $\alpha_3\neq0.$
Thus, we consider the following subcases:
\begin{itemize}
\item If $\alpha_3=0, \alpha_2=0, \alpha_1 \neq0,$ then choosing $a_{3}=\frac{\alpha_1}{a_{6}},$ we obtain the algebra $\mathcal{L}_{71}^{\alpha \neq 0}.$

\item If  $\alpha_3\neq 0, \alpha_2=0,\alpha_1=0,$ then choosing $a_{3}=\alpha_3,$ we obtain the algebra $\mathcal{L}_{72}^{\alpha \neq 0}.$

\item If  $\alpha_3\neq 0, \alpha_2=0,\alpha_1\neq0,$ then choosing $a_{3}=\alpha_3,$ $a_6 = \frac{\alpha_1} {\alpha_3},$ we obtain the algebra $\mathcal{L}_{73}^{\alpha \neq 0}.$

	\item If $\alpha_3 \neq0, \alpha_2 \neq0,$ then choosing $a_{3}=\alpha_3, a_{6}=\alpha_2,$ we have the algebra $\mathcal{L}_{74}^{\alpha \neq 0, \beta}.$

\end{itemize}

\item Let $\alpha_5^2-\alpha_4\alpha_6=0,$ then choosing $a_3 = -\frac {a_5\alpha_5} {\alpha_4}, $ we can suppose $\alpha_4=\alpha_5=0.$
%

    If $\alpha_6=0,$ then we have
    $$\mu(e_2,e_2)=\frac{\alpha_1}{a_{3}a_{6}-a_{4}a_{5}}e_5,\quad \mu(e_2,e_3)=\frac{a_{3}\alpha_2 + a_{4}\alpha_3}{a_{3}a_{6}-a_{4}a_{5}}e_5, \quad \mu(e_2,e_4)=\frac{a_{5}\alpha_2 +a_{6}\alpha_3}{a_{3}a_{6}-a_{4}a_{5}}e_5,$$
and consider the following cases:

\begin{itemize}
\item If $\alpha_3=0, \alpha_2=0, \alpha_1 \neq0,$ then choosing $a_{3}=\frac{\alpha_1}{a_{6}},$ $a_5=0,$ we obtain the algebra $\mathcal{L}_{71}^{\alpha = 0}.$

\item If  $(\alpha_3, \alpha_2)\neq (0, 0),$ $\alpha_1=0,$ then we may suppose $\alpha_3 \neq 0$ and choosing $a_{3}=\alpha_3,$ $a_{4}=-\alpha_2,$ $a_{5}=0,$ we obtain the algebra $\mathcal{L}_{72}^{\alpha = 0}.$

\item If  $(\alpha_3, \alpha_2)\neq (0, 0),$ $\alpha_1\neq0,$ then we may suppose $\alpha_3 \neq 0$ and choosing  $a_{3}=\alpha_3,$ $a_{4}=-\alpha_2,$ $a_{5}=0,$ $a_6 = \frac{\alpha_1}{\alpha_3},$ we obtain the algebra $\mathcal{L}_{73}^{\alpha = 0}.$

\end{itemize}

If $\alpha_6\neq0,$ then $a_{5}=0$ and we have
$$\mu(e_2,e_2)=\frac{\alpha_1}{a_{3}a_{6}}e_5,\quad \mu(e_2,e_3)=\frac{a_{3}\alpha_2 + a_{4}\alpha_3}{a_{3}a_{6}}e_5,\quad \mu(e_2,e_4)=\frac{\alpha_3}{a_{3}}e_5,\quad \mu(e_4,e_4)=\frac{a_{6}\alpha_6}{a_{3}}e_5.$$
Then we consider the following subcases:
\begin{itemize}
		
\item If $\alpha_3=0, \alpha_2=0,\alpha_1\neq0,$ then choosing $a_{3}=a_{6}\alpha_6,$ $a_6 = \sqrt{\frac{\alpha_1}{\alpha_6}},$ we have the algebra $\mathcal{L}_{75}.$

\item If $\alpha_3=0, \alpha_2\neq0,$ then choosing $a_{3}=\alpha_2\alpha_6,$ $a_6 = \alpha_2,$ we have the algebra $\mathcal{L}_{76}^{\alpha}.$

    \item If $\alpha_3\neq0,$ then choosing $a_{3}=\alpha_3,$ $a_4 = -\alpha_2,$
     $a_6 = \frac{\alpha_3}{\alpha_6},$ we have the algebra $\mathcal{L}_{77}^{\alpha}.$

\end{itemize}
\end{itemize}
\end{proof}

\section*{Funding}

This work is supported by grant ''Automorphisms of operator algebras, classifications of infinite-dimensional non associative algebras and superalgebras``, No. FZ-202009269,
Ministry of higher education, science and innovations of the Republic of Uzbekistan, 2021-2025.

\section*{Data availability}

No data was used for the research described in the article.

\section*{Conflict of Interest.} The authors have no conflicts and interest to declare that are relevant to the content of this article.

\section*{Acknowledgments.} The authors are grateful to the referee of the paper for his(her) useful comments.


\begin{thebibliography}{99}

\bibitem{HamFat} Abchir H., Abid F., Boucetta M., (2022). A class of Lie racks associated to symmetric Leibniz algebras. \emph{Journal of Algebra and Its Applications}, 21 (11), 2250230.

\bibitem{4-dimnilpotent} Albeverio S., Omirov B.A., Rakhimov I.S., (2006).
Classification of $4$-dimensional nilpotent complex Leibniz
algebras, \emph{Extracta mathematicae}, 21(3), 197-210.

\bibitem{Albuquerque} Albuquerque H., Barreiro E., Benayadi S., Boucetta M., S\'{a}nchez J.M., (2021) Poisson algebras and symmetric Leibniz bialgebra structures on oscillator Lie algebras, \emph{Journal of Geometry and Physics}, 160, 103939.

\bibitem{Alvarez} Alvarez M.A., Kaygorodov I., (2021).
The algebraic and geometric classification of nilpotent weakly associative and symmetric Leibniz algebras, \emph{Journal of Algebra}, 588, 278-314.


\bibitem{AyupovBook} Ayupov Sh.A., Omirov B.A., Rakhimov I.S., (2019). Leibniz Algebras: Structure and Classification, Taylor and Francis Group Publisher, ISBN 0367354810, 323 pp.

\bibitem{Barnes} Barnes D.W., (2012). On Levi's theorem for Leibniz algebras, \emph{Bulletin of the Australian Mathematical Society}, 86(2), 184-185.



     \bibitem{Barreiro} Barreiro E., Benayadi S., (2016).
     A new approach to Leibniz bialgebras, \emph{Algebras and Representation Theory}, 19(1), 71-101.

     \bibitem{Benayadi1} Benayadi S., (2020). On representations of symmetric Leibniz algebras, \emph{Glasgow Mathematical Journal}, 62, S99-S107.


     \bibitem{Benayadi3}  Benayadi S., Hidri S., (2014). Quadratic Leibniz algebras. \emph{Journal of Lie Theory}, 24(3), 737-759.

\bibitem{Blokh} Bloh A., (1965). On a generalization of the concept of Lie algebra, \emph{Doklady Akademii Nauk SSSR}, 165, 471-473.


\bibitem{CanKhud} Ca\~{n}ete E.M.,   Khudoyberdiyev A.Kh., (2013).
 The classification of 4-dimensional Leibniz algebras, \emph{Linear Algebra and its Applications}, 439(1), 273-288.

\bibitem{Casas3-dim} Casas J.M., Insua M.A., Ladra M., Ladra S. (2012). An algorithm for the classification of 3-dimensional complex Leibniz algebras. \emph{Linear Algebra and its Applications}, 436, 3747-3756.

\bibitem{CicaloGraaf}  Cical\`{o} S., De Graaf W., Schneider C., (2012). Six-dimensional nilpotent Lie algebras, \emph{Linear Algebra and its Applications}, 436(1), 163-189.


\bibitem{Darijani} Darijani I., Usefi H., (2016). The classification of 5-dimensional p-nilpotent restricted Lie algebras over perfect fields, I. \emph{Journal of Algebra}, 464, 97-140.



 \bibitem{Feldvoss} Feldvoss J., (2019). Leibniz algebras as non-associative algebras, \emph{Contemporary Mathematics},  721, 115-149.

\bibitem{Fialowski} {\rm  Fialowski A., Khudoyberdiyev A.Kh., Omirov B.A.,} (2013)
 A characterization of nilpotent Leibniz algebras,
\emph{Algebras and Representation Theory}, 16(5), 1489-1505.

\bibitem{Gomez-Vidal} \`{G}omez-Vidal S., Khudoyberdiyev A.Kh., Omirov B.A., (2014). Some remarks on semisimple Leibniz algebras. \emph{Journal of Algebra}, 410, 526-540.

\bibitem{Graaf} De Graaf W., (2007). Classification of 6-dimensional nilpotent Lie algebras over fields of characteristic not 2, \emph{Journal of Algebra}, 309 (2), 640-653.




\bibitem{Ismailov} Ismailov N., Kaygorodov I., Volkov Yu., (2018). The geometric classification of Leibniz algebras, \emph{International Journal of Mathematics}, 29(5), 1850035.


\bibitem{Jibladze1} Jibladze M., Pirashvili T., (2020). Lie theory for symmetric Leibniz algebras, \emph{Journal of Homotopy and Related Structures}, 15 (1), 167-183.

\bibitem{Jibladze2} Jibladze M., Pirashvili T., (2019). On certain class of Leibniz algebras, arXiv:1910.02716.

\bibitem{Khudoy-5dim} Khudoyberdiyev A.Kh., Rakhimov I.S., Said Husain Sh.K., (2014). On classification of 5-dimensional solvable Leibniz algebras. \emph{Linear Algebra and its Applications}, 457, 428-454.


 \bibitem{LiMo} Li Y., Mo Q.H., (2017). Some new results for Leibniz algebras and non-associative algebras. \emph{Southeast Asian Bulletin of Mathematics}, 41 (1), 45-54.


  \bibitem{Loday} Loday J.L., (1993). Une version non commutative des
alg\`{e}bres de Lie: les alg\`{e}bres de Leibniz,
\emph{L'Enseignement Math\'{e}matique}, 39, 269-293.

\bibitem{Mason} Mason G, Yamskulna G., (2013). Leibniz algebras and Lie algebras. \emph{Symmetry, Integrability and Geometry: Methods and Applications}, 9, 063, 10 pp.


\bibitem{Remm} Remm E., (2022). Weakly associative and symmetric Leibniz algebras. \emph{Journal  of Lie Theory}, 32 (4), 1171-1186.

\bibitem{Snobl} Snobl L., Winternitz P., (2017). Classification and Identification of Lie algebras. CRM Monograph series, Volume 33, 306 pp.

 \bibitem{Stitzinger} Stitzinger E., White A. (2018). Conjugacy of Cartan subalgebras in solvable Leibniz algebras and real Leibniz algebras. \emph{Algebras and Representation Theory}, 21 (3), 627-633.

\bibitem{Towers}  Towers D.A., (2020). Leibniz $A$-algebras. \emph{Communications in Mathematics}, 28 (2), 103-121.

\bibitem{Turkowski} Turkowski P., (1990). Solvable Lie algebras of dimension six, \emph{Journal of Mathematical Physics}, 31 (6), 1344-1350.
\end{thebibliography}
\end{document}